\newtheorem{remark}{Remark}[section]
\newcommand{\bn}{\textbf{n}}
\def\E{{\mathcal E}}
\def\3bar{{|\hspace{-.02in}|\hspace{-.02in}|}}
\def\O{\Omega}
\def\pa{\partial}
\newtheorem{gWG}{Generalized weak Galerkin Scheme}
\begin{document}

\setlength{\parindent}{0.25in} \setlength{\parskip}{0.08in}

\title{Generalized Weak Galerkin Finite Element Methods for Biharmonic Equations}

\author{
Dan Li\thanks{School of Mathematical Sciences,  Nanjing Normal University, Nanjing 210023, China (danlimath@163.com).}
\and
Chunmei Wang \thanks{Department of Mathematics, University of Florida, Gainesville, FL 32611 (chunmei.wang@ufl.edu).  The research of Chunmei Wang was partially supported by National Science Foundation Grants DMS-2136380 and DMS-2206332.}
\and
Junping Wang\thanks{Division of Mathematical Sciences, National Science Foundation, Alexandria, VA 22314 (jwang@nsf.gov). The research of Junping Wang was supported by the NSF IR/D program, while working at National Science Foundation. However, any opinion, finding, and conclusions or recommendations expressed in this material are those of the author and do not necessarily reflect the views of the National Science Foundation.}}

\maketitle

\begin{abstract}
The generalized weak Galerkin ({\rm g}WG) finite element method is proposed and analyzed for the biharmonic equation. A new   generalized discrete weak second order partial derivative  is introduced in the {\rm g}WG scheme  to allow arbitrary combinations of piecewise polynomial functions defined in the interior and on the boundary of general polygonal or polyhedral elements. The error estimates are established for the numerical approximation in a discrete $H^2$ norm and a  $L^2$ norm.  The numerical results are reported to demonstrate the accuracy and flexibility of our proposed {\rm g}WG method for the biharmonic equation.
\end{abstract}

\begin{keywords}  {\rm g}WG, weak Galerkin, finite element methods,  generalized discrete weak second order partial derivative, biharmonic equation, polytopal partitions.
\end{keywords}

\begin{AMS}
Primary 65N30, 65N12, 65N15; Secondary 35B45, 35J50.
\end{AMS}

\section{Introduction}
This paper is concerned with the new development of the generalized weak Galerkin finite element method for the biharmonic equation. For simplicity, we consider the biharmonic equation that seeks an unknown function $u$ satisfying
\begin{equation}\label{model-problem}
\begin{split}
\Delta^2u&=f,\quad\mbox{in}~~\O\subset \mathbb{R}^d,\\
        u&= g_1, \quad \mbox{on}~~\pa\O,\\
\frac{\pa u}{\pa\textbf{n}}&= g_2, \quad\mbox{on}~~\pa\O,
\end{split}
\end{equation}
where $d=2,3$, $\O$ is an open bounded domain with Lipschitz continuous boundary $\pa\O$, the functions $f$,   $g_1$ and $g_2$ are  given on $\O$ or $\pa\O$ as appropriate, and $\textbf{n}$ is an unit outward normal direction to $\pa\O.$

A weak formulation for \eqref{model-problem} seeks  $u\in H^2(\O)$ satisfying $u|_{\pa\O}=g_1$ and $\frac{\pa u}{\pa\textbf{n}}|_{\pa\O}=g_2$ such that
\begin{equation}\label{variation-form}
\sum_{i,j=1}^d(\pa_{ij}^2u,\pa_{ij}^2v)=(f,v),~~~~\forall v\in H_0^2(\O),
\end{equation}
where $H_0^2(\O)=\{v\in H^2(\O):v|_{\pa\O}=0, \nabla v |_{\pa\O}=\textbf{0}\}$.  

The biharmonic equation has extensive applications in fluid dynamics. The conforming finite element methods have been developed for the biharmonic equation  by constructing finite element spaces as subspaces of $H^2(\Omega)$. The $H^2$-conforming methods essentially require $C^1$-continuity for the
underlying piecewise polynomials (known as finite element functions) on a prescribed finite element partition. The $C^1$-continuity imposes an enormous difficulty in the construction of the corresponding finite element functions in practical
computation. Due to the complexity in the construction of $C^1$-continuous elements, $H^2$-conforming finite element methods are rarely used in practice to solve the biharmonic equation.
As an alternative approach, the nonconforming and discontinuous Galerkin (DG) finite element methods have been employed to solve the biharmonic equation, such as the Morley element \cite{WWL-2022,LM19688,WX2007-3}, the $C^0$ interior penalty method \cite{BS2005}, the hp-version interior-penalty DG method \cite{MSB2007}, and the hybridizable DG method \cite{CDG2009}.  Recently, the weak Galerkin (WG) methods \cite{WWL-2022,mwy-biharmonic,Eff-MWY2017,WW_bihar-2014,WW_HWG-2015, RZZ2015},  the virtual element methods \cite{AMV2018} and the hybrid high-order methods \cite{DA2021} have been developed to solve the biharmonic equation.

Weak Galerkin finite element method is a newly-developed numerical technique for PDEs. In the WG method,  the differential operators in the variational formulation are approximated by a framework which mimics the theory of distributions for piecewise polynomials. The usual regularity of the approximating functions is compensated by carefully-designed stabilizers.  WG methods have been investigated for solving numerous model PDEs  \cite{li-wang,ww12,mwy,mwy-biharmonic,ww2,ww8,ww6,wy,ww4} and  have shown its great potential as a powerful numerical technique in scientific computing. The fundamental difference between the WG methods and other existing finite element methods is the use of weak derivatives and weak continuities in the design of numerical schemes based on conventional weak forms for the underlying PDE problems. Due to its great structural flexibility, WG methods are well suited to a wide class of PDEs by providing the needed stability and accuracy in approximations. A recent development of WG, named ``Primal-Dual Weak Galerkin (PDWG)'' has been proposed for problems for which the usual numerical methods are difficult to apply \cite{CWW2023,cwwdivcurl,cwcondif,cwwinterface,lwwhyper,lwspdwg,wcauchy2,wcauchy,wmpdwg,wwtrans,wwcauchy,wwnondiv,wzcondif}. The essential idea of PDWG is to interpret the numerical solutions as a constrained minimization of some functionals with constraints that mimic the weak formulation of the PDEs by using weak derivatives. The resulting Euler-Lagrange equation offers a symmetric scheme involving both the primal variable and the dual variable (Lagrange multiplier).  The PDWG methods have also been  extended to a more general $L^p$ theory   \cite{CWW2023, CWWsecond, CWWdiv,WWL2022}.

In this paper, we propose a novel generalized weak Galerkin method ({\rm g}WG) to solve the biharmonic equation. The {\textbf {novelty}} of the {\rm g}WG method is to propose:  1) arbitrary combinations of piecewise polynomials on the general polytopal partitions; and 2)  the  generalized discrete weak second order partial derivative. Therefore,  the {\rm g}WG method provides a high flexibility in solving a wide range of PDEs.

This paper is organized as follows. In Section \ref{Section:WeakHessian}, we define  the generalized discrete weak second order partial derivative. In Section \ref{Section:WG-scheme}, the {\rm g}WG scheme for the biharmonic model equation \eqref{model-problem} is proposed and the solution existence and uniqueness is presented. Section \ref{Section:EE} is devoted to a  derivation of an error equation for the proposed {\rm g}WG scheme.  In Section \ref{technique-estimate},   some technical results are presented. Sections \ref{error estimate}-\ref{errorl2}
are devoted to establishing  the error estimates for the numerical approximation in a discrete $H^2$-norm and a usual $L^2$-norm. Finally,  a series of numerical results are provided to demonstrate the accuracy and efficiency of the proposed {\rm g}WG method for the biharmonic equation  \eqref{model-problem} in Section \ref{Section:NE}.

This paper will follow some standard notations for the Sobolev spaces and norms. Let $D\subset\mathbb{R}^d (d=2,3)$ be any open bounded domain with Lipschitz continuous boundary. In the Sobolev space $H^s(D)$ for any integer $s\geq 0$, we denote the inner product, seminorm and norm by $(\cdot,\cdot)_{s, D}$, $|\cdot|_{s, D}$ and $\|\cdot\|_{s,D}$, respectively. When $s=0$,  the inner product, seminorm and norm are denoted by $(\cdot, \cdot)_{D}$, $|\cdot|_{D}$ and $\|\cdot\|_{D}$, respectively. When $D=\O$, the subscript $D$ is dropped in the corresponding inner product, seminorm and norm.  We use ``$\lesssim$'' to represent ``no greater than a generic positive constant independent of the meshsize or functions appearing in the inequalities''.

\section{Generalized discrete weak second order partial derivative}\label{Section:WeakHessian}

The goal of this section is to introduce the definition of the  generalized discrete weak second order partial derivative. To this end, let ${\cal T}_h$ be a polygonal or polyhedral partition of the domain $\O$ that satisfies the shape regular  assumption specified in \cite{ellip_MC2014}. Denote by $\E_h$ the set of all edges or flat faces in ${\cal T}_h$ and $\E_h^0=\E_h\setminus\pa\O$ the set of all interior edges or flat faces of ${\cal T}_h$. For each polytopal element $T\in{\cal T}_h$, denote by $h_T$ the diameter of $T$ and $h=\max_{T\in{\cal T}_h}h_T$ the meshsize of ${\cal T}_h$.
Denote by $P_r(T)$ the set of polynomials defined on $T$ with total degree no more than $r$.

Let $T\in{\cal T}_h$ be an element with boundary $\pa T$.
We introduce a weak function $v=\{v_0,v_b,\pmb{v_g}\}$ such that $v_0\in L^2(T)$, $v_b\in L^2(\partial T)$ and $\pmb{v_g}\in[L^2(\partial T)]^d$ ($d=2, 3$). The first and second components $v_0$ and $v_b$ represent  the  values  of the weak function $v$ in the interior and on the boundary of $T$, respectively. The third component $\pmb{v_g}=(v_{g1},\ldots,v_{gd})$ represents the gradient of $v$ on the boundary of $T$. It should be pointed out that $v_b$ and $\pmb{v_g}$ may not   necessarily be related to the traces of $v_0$ and $\nabla v_0$ on $\pa T$, respectively.
Denote by $V_{k,m,\ell}(T)$ the local weak finite element space on $T$; i.e.,
\begin{equation*}
V_{k,m,\ell}(T)=\{v=\{v_0,v_b,\pmb{v_g}\}:v_0\in P_k(T),v_b\in P_m(e),\pmb{v_g}\in[P_\ell(e)]^d,~e\subset\pa T\},
\end{equation*}
where $k\geq2$,  $m\geq0$ and $\ell\geq0$ are any given integers.

For each edge or face $e\in\E_h$, denote by $Q_b$ and $Q_{g}$  the usual $L^2$ projection operators onto $P_m(e)$ and $P_\ell(e)$ respectively. Denote by $\pmb{Q_g}=(Q_{g1},\ldots,Q_{gd})$ the usual $L^2$ projection operator onto $[P_\ell(e)]^d$.

\begin{definition}\label{def}
(Generalized discrete weak second order partial derivative)
 Let $n\geq0$ be any given integer. For $i, j=1,\cdots, d$, a generalized discrete weak second order partial  derivative for any weak function $v\in V_{k,m,\ell}(T)$, denoted by $\pa_{ij,g,T}^2v$, is given by 
\begin{equation}\label{weak Hessian-1}
 \pa_{ij,g,T}^2v=\pa_{ij}^2v_0+\delta_gv, 
\end{equation}
where $\delta_{g}v$ is defined as a linear functional in $P_n(T)$ such that
\begin{equation}\label{weak Hessian-2}
(\delta_gv,\varphi)_T=\langle(Q_bv_0-v_b)n_i,\pa_j\varphi\rangle_{\pa T}-\langle Q_{gi}(\nabla v_0)-v_{gi},\varphi n_j\rangle_{\pa T},
\end{equation}
for any $\varphi\in P_n(T)$. Here, $\bn=(n_1,\ldots,n_d)$ is an unit outward normal direction to $\pa T$.

\end{definition}


\section{Generalized weak Galerkin schemes}\label{Section:WG-scheme}

This section presents the {\rm g}WG method for the weak form \eqref{variation-form} of the biharmonic model equation \eqref{model-problem}.

A global weak finite element space $V_h$ is obtained by patching the local weak finite element space $V_{k,m,\ell}(T)$ over all the elements $T\in{\cal T}_h$ through a common value $v_b$ on the interior edges/faces $e\in\E_h^0$; i.e.,
\begin{equation*}
V_h=\{v=\{v_0,v_b,\pmb{v_g}\}:v|_T\in V_{k,m,\ell}(T),~T\in{\cal T}_h\}.
\end{equation*}
Denote by $V_h^0$ a subspace of the global weak finite element space $V_h$ given by
$$
V_h^0=\{v:v\in V_h,v_b=0,~\pmb{v_g}=\pmb{0},~\text{on}~e\subset\pa T\cap\pa\O\}.
$$

For simplicity, we denote by $\pa^2_{ij,g}v$ the  generalized discrete weak second order partial derivative $\pa_{ij,g,T}^2v$ computed by  Definition \ref{def}; i.e.,
\begin{equation*}
 (\pa^2_{ij,g} v)|_T =\pa_{ij,g,T}^2(v|_T),\qquad v\in V_h. 
\end{equation*}

For any $w,v\in V_h$, we introduce the following two bilinear forms:
\begin{equation*}\label{stabilizer}
\begin{split}
(\pa^2_gw,\pa^2_gv)_{{\cal T}_h}=&\sum_{T\in{\cal T}_h}\sum_{i,j=1}^d(\pa_{ij,g}^2w,\pa_{ij,g}^2v)_T,\\
s(w,v)=&\sum_{T\in{\cal T}_h}\rho_1h_T^{\gamma_1}\langle Q_b w_0-w_b, Q_b v_0-v_b\rangle_{\pa T}\\
        &+\sum_{T\in{\cal T}_h}\rho_2h_T^{\gamma_2}\langle \pmb{Q_g}(\nabla w_0)-\pmb{w_g},\pmb{Q_g}(\nabla v_0)-\pmb{v_g}\rangle_{\pa T},
\end{split}
\end{equation*}
 where $\rho_i>0$ for $i=1,2$, and $\gamma_i\in \mathbb R$ for $i=1,2$.

 The {\rm g}WG scheme for the biharmonic equation \eqref{model-problem} based on the variational formulation \eqref{variation-form} is given as follows: 

\begin{gWG}
 Find $u_h=\{u_0,u_b,\pmb{u_g}\}\in V_h$ such that $u_b=Q_bg_1$, $\pmb{u_g}\cdot\bn=Q_gg_2$ and $\pmb{u_g}\cdot\pmb{\tau}=Q_g(\nabla g_1\cdot\pmb{\tau})$ on $\pa\O$ satisfying 
\begin{equation}\label{WG-scheme}
(\pa^2_gu_h,\pa^2_gv)_{{\cal T}_h}+s(u_h,v)=(f,v_0),\qquad\forall v\in V_h^0,
\end{equation}
\end{gWG}
where $\pmb{\tau}$ is an unit tangential vector to the edge or face $e\subset\pa\O.$

For any $v\in V_h$, let us introduce a seminorm given by
\begin{equation}\label{triple-bar}
\3barv\3bar^2=(\pa^2_gv,\pa^2_gv)_{{\cal T}_h}+s(v,v).
\end{equation}

\begin{lemma}\label{tri-bar-prove}
For any $v\in V_h$, the seminorm $\3barv\3bar$ defined by \eqref{triple-bar} is a norm in the linear space $V_h^0$.
\end{lemma}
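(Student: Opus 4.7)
The functional $\3bar\cdot\3bar^2$ is a sum of squared $L^2$-norms (the bulk contributions $\|\pa^2_{ij,g}v\|_T^2$ and the boundary jump contributions of $s(v,v)$), so nonnegativity, absolute homogeneity, and the triangle inequality (via Cauchy--Schwarz) follow automatically. The only substantive claim is positive definiteness on $V_h^0$: if $\3bar v\3bar=0$ for $v=\{v_0,v_b,\pmb{v_g}\}\in V_h^0$, then $v\equiv 0$.

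I would first extract from $s(v,v)=0$ (using $\rho_i>0$) the element-wise identities $Q_bv_0=v_b$ and $\pmb{Q_g}(\nabla v_0)=\pmb{v_g}$ on $\pa T$ for every $T\in{\cal T}_h$. Substituted into \eqref{weak Hessian-2}, these annihilate its right-hand side for every $\varphi\in P_n(T)$, so $\delta_gv\equiv 0$ in $P_n(T)$ and hence $\pa^2_{ij,g}v=\pa^2_{ij}v_0$ pointwise in $T$. The vanishing of $(\pa^2_gv,\pa^2_gv)_{{\cal T}_h}$ then forces $\pa^2_{ij}v_0\equiv 0$ on each element (tacitly using $n\geq k-2$, so that $P_{k-2}(T)\subseteq P_n(T)$ and $\pa^2_{ij}v_0$ may be tested against itself), so $v_0|_T$ is affine on every element. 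Next, for any $T$ with an edge $e\subset\pa T\cap\pa\O$, the definition of $V_h^0$ gives $v_b=0$ and $\pmb{v_g}=\pmb 0$ on $e$, and combining with the identities above yields $Q_bv_0|_e=0$ and $\pmb{Q_g}(\nabla v_0)|_e=\pmb 0$. Since $\nabla v_0|_T$ is a constant vector and $\pmb{Q_g}$ reproduces constants, $\nabla v_0|_T=\pmb 0$; then $v_0|_T$ is constant and $Q_b$ reproduces that constant, so $v_0|_T\equiv 0$ on every element abutting $\pa\O$.

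The main obstacle is propagating $v_0\equiv 0$ inward. I would induct on the dual-graph distance to $\pa\O$: if $T'$ is adjacent across an interior edge $e'$ to an element $T$ already known to satisfy $v_0|_T\equiv 0$, then the single-valued trace $v_b$ on $e'$ inherits the value $v_b|_{e'}=Q_bv_0|_T=0$, forcing $Q_bv_0|_{T'}=0$ on $e'$; analogously, the common $\pmb{v_g}$ on $e'$ inherits $\pmb{v_g}|_{e'}=\pmb{Q_g}(\nabla v_0|_T)=\pmb 0$, forcing $\pmb{Q_g}(\nabla v_0|_{T'})=\pmb 0$ on $e'$. By the same constant-reproduction argument as on the boundary, these yield $\nabla v_0|_{T'}=\pmb 0$ and then $v_0|_{T'}\equiv 0$. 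The induction exhausts ${\cal T}_h$, and $v_b\equiv 0$, $\pmb{v_g}\equiv\pmb 0$ then follow from the Step-1 identities $v_b=Q_bv_0$ and $\pmb{v_g}=\pmb{Q_g}(\nabla v_0)$. The delicate implicit hypotheses are $n\geq k-2$ and the single-valuedness of $\pmb{v_g}$ across interior faces (the natural reading of the $V_h$ definition); without the latter, the gradient information cannot transfer between neighboring elements and the propagation step breaks, so this is the crucial consistency condition the argument relies on.
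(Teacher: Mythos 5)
Your proof is correct and takes essentially the same route as the paper's: both extract $Q_bv_0=v_b$ and $\pmb{Q_g}(\nabla v_0)=\pmb{v_g}$ from $s(v,v)=0$, deduce $\delta_gv=0$ and hence $\pa^2_{ij}v_0=0$, and then use the single-valued traces together with the boundary conditions to force $\nabla v_0=\pmb{0}$ and $v_0=0$ (the paper phrases the propagation as $\nabla v_0\in C^0(\O)$ and $v_0\in C^0(\O)$ rather than as an induction on dual-graph distance, but the mechanism is identical, and the single-valuedness of $\pmb{v_g}$ across interior faces that you flag is indeed the intended reading of $V_h$, as the proof of Lemma \ref{error-equation} confirms). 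The one caveat you raise that is not actually needed is $n\ge k-2$: by \eqref{weak Hessian-1} the quantity $\pa^2_{ij,g}v=\pa^2_{ij}v_0+\delta_gv$ is a genuine $L^2(T)$ function (not merely a functional on $P_n(T)$), so the vanishing of its $L^2$ norm together with $\delta_gv=0$ yields $\pa^2_{ij}v_0=0$ pointwise with no testing against $P_n(T)$ and no degree restriction.
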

\begin{proof}
It suffices to verify the positivity property for $\3bar\cdot\3bar$. To this end, for any $v\in V_h^0$, it follows from $\3barv\3bar =0$   that $\pa_{ij,g}^2v=0$ on each element $T\in{\cal T}_h$ for $i,j=1,\ldots,d$ and $s(v,v)=0$. This leads to $Q_bv_0=v_b$ and $\pmb{Q_g}(\nabla v_0)= \pmb{v_g}$ on each $\pa T$, which, together with  $\pa^2_{ij,g}v=0$ on each $T$ and \eqref{weak Hessian-1}-\eqref{weak Hessian-2}, gives $\delta_g v=0$ and further $\pa^2_{ij}v_0=0$ for $i,j=1,\ldots,d$. Therefore, we obtain  $\nabla v_0=const$ on each $T$.  This, from the fact $\pmb{Q_g}(\nabla v_0)= \pmb{v_g}$ on each $\pa T$, gives $\nabla v_0\in C^0(\O)$. Further, using the boundary condition $\pmb{v_g}=\pmb{0}$ on $\pa\O$, we obtain $\nabla v_0=0$ in $\O$ and $\pmb{v_g}=0$ on each $\pa T$. Therefore, $
v_0=const$ on each $T$. Since $Q_bv_0=v_b$ on each $\pa T$, we have $v_0=v_b$ on each $\pa T$ and further $v_0\in C^0(\O)$. This, together with the boundary condition $v_b=0$ on $\pa\O$, gives $v_0=0$ on each $T$ and $v_b=0$ on each $\pa T$. This completes the proof.
\end{proof}

\begin{lemma}\label{uniqueness proper}
The generalized weak Galerkin scheme \eqref{WG-scheme} has one and only one  solution.
\end{lemma}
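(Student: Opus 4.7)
The plan is to exploit the fact that \eqref{WG-scheme} is a square linear system on a finite-dimensional space: once we have set $u_b = Q_b g_1$, $\pmb{u_g}\cdot\bn = Q_g g_2$, and $\pmb{u_g}\cdot\pmb{\tau} = Q_g(\nabla g_1\cdot\pmb{\tau})$ on $\partial\Omega$, the remaining unknowns $(u_0, u_b|_{\E_h^0}, \pmb{u_g}|_{\E_h^0})$ are matched by the test space $V_h^0$, giving equal numbers of equations and unknowns. Hence existence is equivalent to uniqueness, and it suffices to prove the latter.

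To establish uniqueness, I would consider the homogeneous problem with $f \equiv 0$, $g_1 \equiv 0$, $g_2 \equiv 0$. The boundary prescriptions then force $u_b = 0$ and $\pmb{u_g} = \pmb{0}$ on $\partial\Omega$, so $u_h \in V_h^0$. Since $u_h$ itself is an admissible test function, I would substitute $v = u_h$ into \eqref{WG-scheme} to obtain
\begin{equation*}
(\pa^2_g u_h, \pa^2_g u_h)_{\mathcal{T}_h} + s(u_h, u_h) = 0,
\end{equation*}
which by the definition \eqref{triple-bar} of the seminorm reads $\3baru_h\3bar^2 = 0$.

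The final step invokes Lemma \ref{tri-bar-prove}, which already does the heavy lifting by showing that $\3bar\cdot\3bar$ is in fact a norm on $V_h^0$. This immediately yields $u_h \equiv 0$, completing the proof of uniqueness and therefore existence.

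There is no real obstacle here because the nontrivial content, namely the positivity of $\3bar\cdot\3bar$ on $V_h^0$, has been settled in the preceding lemma. The only minor point to verify carefully is that the difference of any two solutions of \eqref{WG-scheme} (with identical boundary data) indeed lies in $V_h^0$, so that Lemma \ref{tri-bar-prove} applies. Once that is noted, the argument is a two-line consequence of the bilinearity of $(\pa^2_g\cdot,\pa^2_g\cdot)_{\mathcal{T}_h} + s(\cdot,\cdot)$ and the norm property.
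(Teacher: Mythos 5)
Your proposal is correct and follows essentially the same route as the paper: reduce existence to uniqueness via the finite-dimensional square linear system, take the difference of two solutions (equivalently, the homogeneous problem) which lies in $V_h^0$, test with it to get $\3bar\cdot\3bar^2=0$, and conclude by the norm property established in Lemma \ref{tri-bar-prove}. No gaps.
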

\begin{proof}
It suffices to prove that the {\rm g}WG scheme \eqref{WG-scheme} has a unique solution. To this end, assume that $u_h^{(1)}$ and $u_h^{(2)}$ are two different solutions arising from the {\rm g}WG scheme \eqref{WG-scheme}. Letting $v=u_h^{(1)}-u_h^{(2)}\in V_h^0$ in \eqref{WG-scheme}, there holds
$$(\pa^2_g(u_h^{(1)}-u_h^{(2)}),\pa^2_g(u_h^{(1)}-u_h^{(2)}))_{{\cal T}_h}+s(u_h^{(1)}-u_h^{(2)},u_h^{(1)}-u_h^{(2)})=0,$$
which, together with Lemma \ref{tri-bar-prove}, leads to $u_h^{(1)}=u_h^{(2)}$. This completes the proof.
\end{proof}


\section{Error equations}\label{Section:EE}

The objective of this section is to derive an error equation for the {\rm g}WG scheme \eqref{WG-scheme}. To this end, on each element $T\in{\cal T}_h$, denote by $Q_0$ the usual $L^2$ projection operator from $L^2(T)$ onto $P_k(T)$. We further define a projection operator $Q_h$ in the sense  that
$$
Q_h\phi=\{Q_0\phi,Q_b\phi,\pmb{Q_g}(\nabla\phi)\},\qquad\forall \phi\in H^2(\O).
$$
Denote by $\mathbb{Q}_s$ the usual $L^2$ projection operator onto $P_s(T)$ for $s=\min\{k,m,\ell,n\}$.

\begin{lemma}\label{pre-error-equation}
For any $\phi\in H^2(T)$ and $\varphi\in P_s(T)$, there holds
$$
(\pa_{ij,g}^{2}Q_h\phi,\varphi)_T=(Q_0\phi-\phi,\pa^2_{ji}\varphi)_T+(\pa^2_{ij}\phi,\varphi)_T.
$$
\end{lemma}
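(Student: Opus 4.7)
The plan is to unfold the definition of $\pa_{ij,g}^{2}Q_h\phi$ and reduce the identity to two successive integrations by parts on $T$. Substituting $v_0=Q_0\phi$, $v_b=Q_b\phi$, and $v_{gi}=Q_{gi}(\pa_i\phi)$ into \eqref{weak Hessian-1}--\eqref{weak Hessian-2} gives
$$(\pa_{ij,g}^{2}Q_h\phi,\varphi)_T = (\pa_{ij}^{2}Q_0\phi,\varphi)_T + \langle Q_b(Q_0\phi-\phi)\,n_i,\pa_j\varphi\rangle_{\pa T} - \langle Q_{gi}(\pa_i(Q_0\phi-\phi)),\varphi\, n_j\rangle_{\pa T}.$$
Since $s=\min\{k,m,\ell,n\}$, the polynomial traces $n_i\,\pa_j\varphi$ and $\varphi\, n_j$ lie respectively in $P_{s-1}(e)\subset P_m(e)$ and $P_s(e)\subset P_\ell(e)$ on every $e\subset\pa T$, so the $L^2$-adjointness of $Q_b$ and $Q_{gi}$ strips both projections and yields
$$(\pa_{ij,g}^{2}Q_h\phi,\varphi)_T = (\pa_{ij}^{2}Q_0\phi,\varphi)_T + \langle (Q_0\phi-\phi)\,n_i,\pa_j\varphi\rangle_{\pa T} - \langle \pa_i(Q_0\phi-\phi),\varphi\, n_j\rangle_{\pa T}.$$

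Next I would integrate by parts twice on $(\pa_{ij}^{2}Q_0\phi,\varphi)_T=(\pa_j\pa_i Q_0\phi,\varphi)_T$, moving first $\pa_j$ and then $\pa_i$ onto $\varphi$; this produces
$$(\pa_{ij}^{2}Q_0\phi,\varphi)_T = (Q_0\phi,\pa_{ji}^{2}\varphi)_T + \langle \pa_i Q_0\phi,\varphi\, n_j\rangle_{\pa T} - \langle Q_0\phi,\pa_j\varphi\,n_i\rangle_{\pa T}.$$
Plugging this back, the $Q_0\phi$ boundary contributions pair up with those from the previous display and cancel exactly, leaving only traces of $\phi$:
$$(\pa_{ij,g}^{2}Q_h\phi,\varphi)_T = (Q_0\phi,\pa_{ji}^{2}\varphi)_T - \langle \phi,\pa_j\varphi\,n_i\rangle_{\pa T} + \langle \pa_i\phi,\varphi\, n_j\rangle_{\pa T}.$$
Finally, writing $(Q_0\phi,\pa_{ji}^{2}\varphi)_T = (Q_0\phi-\phi,\pa_{ji}^{2}\varphi)_T + (\phi,\pa_{ji}^{2}\varphi)_T$ and running integration by parts in reverse on $(\phi,\pa_{ji}^{2}\varphi)_T$ restores $(\pa_{ij}^{2}\phi,\varphi)_T$ together with surface traces of $\phi$ that precisely cancel the remaining boundary integrals, producing the asserted identity.

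The main obstacle is not any single estimate but the bookkeeping. Three different projections $Q_0$, $Q_b$, and $Q_{gi}$ must be stripped at the correct places, and each stripping relies on a different one of the inequalities $s\leq k$, $s\leq m$, $s\leq\ell$ to certify that the trace in question is a legal test against that projection. The asymmetric pairing of indices in \eqref{weak Hessian-2}---$n_i$ with $\pa_j\varphi$ against $n_j$ with $\varphi$---must be tracked carefully through the two integrations by parts so that the cancellation with the boundary contributions from $(\pa_{ij}^{2}Q_0\phi,\varphi)_T$ is exact.
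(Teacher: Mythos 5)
Your proposal is correct and follows essentially the same route as the paper's proof: unfold the definition \eqref{weak Hessian-1}--\eqref{weak Hessian-2} with $v=Q_h\phi$, strip $Q_b$ and $Q_{gi}$ from the boundary terms by $L^2$-adjointness (using $s\leq m$ and $s\leq \ell$), and convert the remaining boundary integrals via integration by parts. The only difference is bookkeeping: the paper applies the Green's identity once to $w=Q_0\phi-\phi$, collapsing both boundary terms directly into $(Q_0\phi-\phi,\pa^2_{ji}\varphi)_T-(\pa^2_{ij}(Q_0\phi-\phi),\varphi)_T$, whereas you integrate by parts on $Q_0\phi$ and then reverse the process on $\phi$; the net algebra is identical.
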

\begin{proof}
Using   \eqref{weak Hessian-1}-\eqref{weak Hessian-2}, the property of $L^2$ projection operators and the usual integration by parts, we have
\begin{equation*}
\begin{split}
 &(\pa_{ij,g}^2Q_h\phi,\varphi)_T\\
=&(\pa^2_{ij}Q_0\phi+\delta_gQ_h\phi,\varphi)_T\\
=&(\pa^2_{ij}Q_0\phi,\varphi)_T+\langle(Q_b(Q_0\phi)-Q_b\phi)n_i,\pa_j\varphi\rangle_{\pa T}
   -\langle Q_{gi}(\nabla Q_0\phi)-Q_{gi}(\nabla\phi),\varphi n_j\rangle_{\pa T}\\
 =&(\pa^2_{ij}Q_0\phi,\varphi)_T+\langle(Q_0\phi-\phi)n_i,\pa_j\varphi\rangle_{\pa T}-\langle\pa_iQ_0\phi-\pa_i\phi,\varphi n_j\rangle_{\pa T}\\
=&(\pa^2_{ij}Q_0\phi,\varphi)_T+(Q_0\phi-\phi,\pa^2_{ji}\varphi)_T-(\pa^2_{ij}(Q_0\phi-\phi),\varphi)_T \\
=&(Q_0\phi-\phi,\pa^2_{ji}\varphi)_T+(\pa^2_{ij}\phi,\varphi)_T.
\end{split}
\end{equation*}
This completes the proof.
\end{proof}

\begin{lemma}\label{error-equation}
Let $u$ and $u_h\in V_h$ be   the exact solution of the model equation \eqref{model-problem} and the   numerical solution arising from {\rm gWG} scheme \eqref{WG-scheme}, respectively. Let $e_h=Q_hu-u_h$ be the error function. Then, the error function $e_h$ satisfies the following error equation
\begin{eqnarray}\label{Error-equation}
(\pa^2_ge_h,\pa^2_gv)_{{\cal T}_h}+s(e_h,v)=\zeta_u(v),~~~~~\forall v\in V_h^0,
\end{eqnarray}
where the  term $\zeta_u(v)$ is given by
\begin{equation}\label{error equation-remainder}
\begin{split}
&\zeta_u(v)\\
=&s(Q_hu,v)+\sum_{T\in{\cal T}_h}\sum_{i,j=1}^d(Q_0u-u,\pa^2_{ji}(\mathbb{Q}_s\pa^2_{ij,g}v))_{T}+(\pa^2_{ij}v_0,(\mathbb{Q}_s-I)\pa^2_{ij}u)_T\\
 &+\langle(v_0-v_b)n_i,\pa_j(\mathbb{Q}_s\pa^2_{ij}u-\pa^2_{ij}u)\rangle_{\pa T}
    +\langle\pa_iv_0-v_{gi},(I-\mathbb{Q}_s)\pa^2_{ij}u\cdot n_j\rangle_{\pa T}\\
 &+(\pa^2_{ij,g}Q_hu,(I-\mathbb{Q}_s)\pa^2_{ij,g}v)_T.
\end{split}
\end{equation}
\end{lemma}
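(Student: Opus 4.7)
The plan is to test the scheme with $v \in V_h^0$, replace $u_h$ by $Q_h u$ in the bilinear form on the left, and measure the mismatch. Subtracting \eqref{WG-scheme} from $(\pa^2_g Q_h u, \pa^2_g v)_{{\cal T}_h} + s(Q_h u, v)$ yields
\[
(\pa^2_g e_h, \pa^2_g v)_{{\cal T}_h} + s(e_h, v) = (\pa^2_g Q_h u, \pa^2_g v)_{{\cal T}_h} + s(Q_h u, v) - (f, v_0),
\]
so it suffices to show that the right-hand side equals $\zeta_u(v)$. The stabilizer $s(Q_h u, v)$ already appears verbatim as the first summand of $\zeta_u(v)$, leaving the discrete Hessian pairing to analyze.

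On each element $T$ and each index pair $(i,j)$ I would first split
\[
(\pa^2_{ij,g} Q_h u, \pa^2_{ij,g} v)_T = (\pa^2_{ij,g} Q_h u, \mathbb{Q}_s \pa^2_{ij,g} v)_T + (\pa^2_{ij,g} Q_h u, (I-\mathbb{Q}_s)\pa^2_{ij,g} v)_T,
\]
in which the second piece is already the last summand of $\zeta_u(v)$. Applying Lemma \ref{pre-error-equation} with $\phi = u$ and $\varphi = \mathbb{Q}_s \pa^2_{ij,g} v \in P_s(T)$ turns the first piece into the $(Q_0 u - u, \pa^2_{ji}(\mathbb{Q}_s \pa^2_{ij,g} v))_T$ summand of $\zeta_u(v)$ plus the exact-Hessian pairing $(\pa^2_{ij} u, \mathbb{Q}_s \pa^2_{ij,g} v)_T$. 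Using self-adjointness of $\mathbb{Q}_s$ together with $\pa^2_{ij,g} v = \pa^2_{ij} v_0 + \delta_g v$ rewrites the latter as $(\pa^2_{ij} u, \pa^2_{ij} v_0)_T + (\pa^2_{ij} v_0, (\mathbb{Q}_s - I)\pa^2_{ij} u)_T + (\mathbb{Q}_s \pa^2_{ij} u, \delta_g v)_T$; the middle summand is Term B, and the last is unfolded via \eqref{weak Hessian-2} with $\varphi = \mathbb{Q}_s \pa^2_{ij} u$. The projections $Q_b$ and $\pmb{Q_g}$ can be dropped because $\pa_j \mathbb{Q}_s \pa^2_{ij} u|_e \in P_{s-1}(e) \subset P_m(e)$ and $\mathbb{Q}_s \pa^2_{ij} u\, n_j|_e \in P_s(e) \subset P_\ell(e)$, both inclusions following from $s = \min\{k,m,\ell,n\}$.

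It remains to handle $\sum_T \sum_{i,j} (\pa^2_{ij} u, \pa^2_{ij} v_0)_T$. Integrating by parts twice on each element and using $\Delta^2 u = f$ produces $(f, v_0)$, which cancels the subtracted source exactly; the leftover boundary contributions are $-\sum_T \langle \nabla(\Delta u)\cdot\bn, v_0\rangle_{\pa T}$ and $\sum_T\sum_{i,j}\langle \pa^2_{ij} u \cdot n_i, \pa_j v_0\rangle_{\pa T}$. Because $v_b$ and $\pmb{v_g}$ are single-valued across interior edges/faces (while $\pa^2_{ij} u$ and $\nabla(\Delta u)$ are continuous) and vanish on $\pa\O$ for $v \in V_h^0$, I can freely insert $v_b$ and $v_{gj}$ to replace $v_0$ and $\pa_j v_0$ by the differences $v_0 - v_b$ and $\pa_j v_0 - v_{gj}$. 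An index swap $i \leftrightarrow j$ combined with the symmetry $\pa^2_{ij} u = \pa^2_{ji} u$ then aligns the $n_i, n_j$ placements with those appearing in $(\mathbb{Q}_s \pa^2_{ij} u, \delta_g v)_T$, and summing the two contributions yields exactly Terms C and D of $\zeta_u(v)$.

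The main obstacle will be the bookkeeping in this final integration-by-parts stage: the double IBP yields two distinct boundary integrals which must be symmetrized separately and then paired precisely with the two components of $(\mathbb{Q}_s \pa^2_{ij} u, \delta_g v)_T$, so that the projection differences $\mathbb{Q}_s \pa^2_{ij} u - \pa^2_{ij} u$ and $(I - \mathbb{Q}_s)\pa^2_{ij} u$ surface in Terms C and D with the correct signs and $n_i$/$n_j$ placements. A minor prerequisite is the verification of the polynomial inclusions used above to pass test functions through $Q_b, \pmb{Q_g}$ and $\mathbb{Q}_s$, all of which follow from $s = \min\{k,m,\ell,n\}$.
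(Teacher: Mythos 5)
Your proposal is correct and follows essentially the same route as the paper's proof: the identical ingredients appear (the splitting by $\mathbb{Q}_s$, Lemma \ref{pre-error-equation} with $\varphi=\mathbb{Q}_s\pa^2_{ij,g}v$, unfolding $\delta_gv$ via \eqref{weak Hessian-2} and dropping $Q_b$, $\pmb{Q_g}$ by the polynomial inclusions from $s=\min\{k,m,\ell,n\}$, and the elementwise double integration by parts with insertion of the single-valued $v_b$ and $\pmb{v_g}$), merely organized in the reverse direction, starting from the subtracted scheme rather than from $(f,v_0)$. No gaps.
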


\begin{proof}
Testing the model equation \eqref{model-problem} against $v_0$ gives
\begin{equation}\label{error-equation-0}
\begin{split}
(f,v_0)
= &\sum_{T\in{\cal T}_h}(\Delta^2 u,v_0)_T\\
=&\sum_{T\in{\cal T}_h}\sum_{i,j=1}^d(\pa^2_{ij}u,\pa^2_{ij}v_0)_T
  -\langle\pa^2_{ij}u,\pa_iv_0\cdot n_j\rangle_{\pa T}+\langle \pa_j(\pa^2_{ij}u)\cdot n_i,v_0\rangle_{\pa T}\\
=&\sum_{T\in{\cal T}_h}\sum_{i,j=1}^d((I-\mathbb{Q}_s)\pa^2_{ij}u,\pa^2_{ij}v_0)_T+(\mathbb{Q}_s\pa^2_{ij}u,\pa^2_{ij}v_0)_T
 \\
  & -\langle\pa^2_{ij}u,(\pa_iv_0-v_{gi})\cdot n_j\rangle_{\pa T} +\langle \pa_j(\pa^2_{ij}u)\cdot n_i,v_0-v_b\rangle_{\pa T},
\end{split}
\end{equation}
where we also used
$$
\sum_{T\in{\cal T}_h}\sum_{i,j=1}^d\langle\pa^2_{ij}u,v_{gi}\cdot n_j\rangle_{\pa T}=0,
$$
 $$
\sum_{T\in{\cal T}_h}\sum_{i,j=1}^d\langle \pa_j(\pa^2_{ij}u)\cdot n_i,v_b\rangle_{\pa T}=0,
$$
since $v_{gi}$ and $v_b$ are single valued on each $e\in\E_h^0$, as well as $\pmb{v_g} =0$ and $v_b=0$ on $e\subset\pa\O.$

From \eqref{weak Hessian-1}-\eqref{weak Hessian-2}, $s=\min\{k,m,\ell,n\}$, the properties of the $L^2$ projection operators, Lemma \ref{pre-error-equation} with $\phi=u$ and $\varphi=\mathbb{Q}_s\pa_{ij,g}^2v\in P_s(T)$, there holds
\begin{equation}\label{error-equation-1-2}
\begin{split}
&\sum_{T\in{\cal T}_h}\sum_{i,j=1}^d(\mathbb{Q}_s\pa^2_{ij}u,\pa^2_{ij}v_0)_T\\
=&\sum_{T\in{\cal T}_h}\sum_{i,j=1}^d(\mathbb{Q}_s\pa^2_{ij}u,\pa^2_{ij,g}v)_T-(\mathbb{Q}_s\pa^2_{ij}u,\delta_gv)_T\\
=&\sum_{T\in{\cal T}_h}\sum_{i,j=1}^d(\mathbb{Q}_s\pa^2_{ij}u,\pa^2_{ij,g}v)_T
  -\langle(Q_bv_0-v_b)n_i,\pa_j(\mathbb{Q}_s\pa^2_{ij}u)\rangle_{\pa T}\\
 &+\langle Q_{gi}(\nabla v_0)-v_{gi},\mathbb{Q}_s\pa^2_{ij}u\cdot n_j\rangle_{\pa T}\\
=&\sum_{T\in{\cal T}_h}\sum_{i,j=1}^d(\pa^2_{ij}u,\mathbb{Q}_s\pa^2_{ij,g}v)_T
  -\langle(v_0-v_b)n_i,\pa_j(\mathbb{Q}_s\pa^2_{ij}u)\rangle_{\pa T}
 \\&  +\langle \pa_iv_0-v_{gi},\mathbb{Q}_s\pa^2_{ij}u\cdot n_j\rangle_{\pa T}\\
=&\sum_{T\in{\cal T}_h}\sum_{i,j=1}^d(\pa^2_{ij,g}Q_hu,\mathbb{Q}_s\pa^2_{ij,g}v)_T-(Q_0u-u,\pa^2_{ji}(\mathbb{Q}_s\pa_{ij,g}^2v))_T
 \\
 & -\langle(v_0-v_b)n_i,\pa_j(\mathbb{Q}_s\pa^2_{ij}u)\rangle_{\pa T} +\langle \pa_iv_0-v_{gi},\mathbb{Q}_s\pa^2_{ij}u\cdot n_j\rangle_{\pa T}.
\end{split}
\end{equation}
Substituting \eqref{error-equation-1-2} into \eqref{error-equation-0} gives
\begin{equation}\label{error-equation-2-1}
\begin{split}
(f,v_0)=&\sum_{T\in{\cal T}_h}\sum_{i,j=1}^d((I-\mathbb{Q}_s)\pa^2_{ij}u,\pa^2_{ij}v_0)_T+(\pa^2_{ij,g}Q_hu,(\mathbb{Q}_s-I)\pa^2_{ij,g}v)_T\\
&-(Q_0u-u,\pa^2_{ji}(\mathbb{Q}_s\pa_{ij,g}^2v))_T+\langle(v_0-v_b)n_i,\pa_j(\pa^2_{ij}u-\mathbb{Q}_s\pa^2_{ij}u)\rangle_{\pa T}\\
&+\langle \pa_iv_0-v_{gi},(\mathbb{Q}_s-I)\pa^2_{ij}u\cdot n_j\rangle_{\pa T}+(\pa_g^2Q_hu,\pa_g^2v)_{{\cal T}_h}.
\end{split}
\end{equation}
Subtracting \eqref{error-equation-2-1} by the {\rm gWG} scheme \eqref{WG-scheme}   gives rise to the error
equation \eqref{Error-equation}. This completes the proof.
\end{proof}

\section{Technical results}\label{technique-estimate}
  We shall provide some technical results in this section.

Let ${\cal T}_h$ be a finite element partition that satisfies the shape regular  assumption described as in \cite{ellip_MC2014}. The trace inequality holds true; i.e.,
\begin{equation}\label{trace inequality}
\|\phi\|_{\pa T}^2\lesssim h_T^{-1}\|\phi\|_T^2+h_T\|\nabla\phi\|_T^2,\qquad\forall\phi\in H^1(T).
\end{equation}
Moreover, for any  polynomial $\phi$, using the inverse inequality, the trace inequality \eqref{trace inequality} can be written as follows
\begin{equation}\label{inverse inequality}
\|\phi\|_{\pa T}^2\lesssim h_T^{-1}\|\phi\|_T^2.
\end{equation}

\begin{lemma}\cite{ellip_MC2014}\label{error projection}
Let ${\cal T}_h$ be a finite element partition satisfying the shape regular  assumption specified in \cite{ellip_MC2014}. Let $\alpha\in[0,k]$ and $\nu\in[0,s]$. For $0\leq t\leq2$, the following estimates hold true; i.e.,
\begin{eqnarray*}
&&\sum_{T\in{\cal T}_h}h_T^{2t}\|\phi-Q_0\phi\|_{t,T}^2\lesssim h^{2(\alpha+1)}\|\phi\|_{\alpha+1}^2,\\
&&\sum_{T\in{\cal T}_h}\sum_{i,j=1}^dh_T^{2t}\|\pa^2_{ij}\phi-\mathbb{Q}_s(\pa^2_{ij}\phi)\|_{t,T}^2\lesssim h^{2(\nu+1)}\|\phi\|_{\nu+3}^2.
\end{eqnarray*}
\end{lemma}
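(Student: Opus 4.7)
The plan is to establish both inequalities by standard approximation theory applied elementwise, using the Bramble--Hilbert lemma together with a scaling argument suited to polytopal partitions. Throughout, I rely on the shape-regularity assumption of \cite{ellip_MC2014}, which allows each polytopal element $T$ to be treated via a reference configuration (or via decomposition into shape-regular simplices) so that standard scaling holds.

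For the first inequality, I would first fix an element $T\in{\cal T}_h$ with diameter $h_T$ and establish the local estimate
\begin{equation*}
h_T^{t}\|\phi-Q_0\phi\|_{t,T}\lesssim h_T^{\alpha+1}|\phi|_{\alpha+1,T},\qquad 0\leq t\leq 2,\ 0\leq\alpha\leq k.
\end{equation*}
The argument goes as follows: since $Q_0$ is the $L^2$-projection onto $P_k(T)\supset P_\alpha(T)$, it reproduces polynomials of degree up to $\alpha$. Working on the reference configuration associated with $T$, the Bramble--Hilbert lemma yields $\|\hat\phi-\widehat{Q_0\phi}\|_{0,\hat T}\lesssim |\hat\phi|_{\alpha+1,\hat T}$. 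Scaling back using shape regularity produces $\|\phi-Q_0\phi\|_{0,T}\lesssim h_T^{\alpha+1}|\phi|_{\alpha+1,T}$. The higher-order norms ($t=1,2$) are then handled either by repeating Bramble--Hilbert at the right scale or by invoking the inverse estimate for polynomial differences combined with the best-approximation property of $Q_0$ against local Taylor polynomials. Multiplying through by $h_T^{2t}$ and summing over $T$ gives the first bound, where $h=\max h_T$ absorbs the element-by-element sum.

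For the second inequality, I would simply apply the first-type estimate with $Q_0$ replaced by $\mathbb{Q}_s$ (the $L^2$-projection onto $P_s(T)$) and with $\phi$ replaced by the scalar function $\partial_{ij}^2\phi$. Since $\nu\in[0,s]$, the reproduction property of $\mathbb{Q}_s$ on $P_\nu(T)$ applies, and Bramble--Hilbert yields
\begin{equation*}
h_T^{t}\|\partial_{ij}^2\phi-\mathbb{Q}_s(\partial_{ij}^2\phi)\|_{t,T}\lesssim h_T^{\nu+1}|\partial_{ij}^2\phi|_{\nu+1,T}\lesssim h_T^{\nu+1}\|\phi\|_{\nu+3,T}.
\end{equation*}
Squaring, summing over $i,j$ and over $T\in{\cal T}_h$, and pulling out $h^{2(\nu+1)}$ produces the stated estimate.

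The main obstacle, and the only nontrivial point, is justifying the Bramble--Hilbert/scaling machinery on a general polygonal or polyhedral element rather than on a simplex. On a simplicial mesh this is textbook; on a shape-regular polytopal mesh one must either decompose $T$ into a bounded number of shape-regular simplices (whose constants depend only on the shape-regularity parameter) or invoke a Bramble--Hilbert result formulated directly for star-shaped domains. This is precisely the content already proved in \cite{ellip_MC2014}, so I would quote the relevant local approximation property from that reference and reduce the proof to the elementwise summation described above.
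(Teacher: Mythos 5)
Your proposal is correct and coincides with what the paper intends: the paper gives no proof of this lemma at all, presenting it as a known result quoted from \cite{ellip_MC2014}, and your reduction to the local Bramble--Hilbert/scaling estimates on shape-regular polytopal elements (with the polytopal case delegated to that reference) followed by elementwise summation is exactly the standard argument that citation encodes. Nothing further is needed.
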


\begin{lemma}\label{strong-Hessian-estimate}
For any $v\in V_h$, there holds
$$
\Big(\sum_{T\in{\cal T}_h}\sum_{i,j=1}^d\|\pa^2_{ij}v_0\|_T^2\Big)^{\frac{1}{2}}\lesssim(1+h^{\frac{-3-\gamma_1}{2}}+h^{\frac{-1-\gamma_2}{2}})
 \3barv\3bar.
$$
\end{lemma}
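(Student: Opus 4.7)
The plan is to exploit the decomposition $\pa^2_{ij}v_0 = \pa^2_{ij,g}v - \delta_g v$ coming from \eqref{weak Hessian-1}, so that by the triangle inequality
\[
\|\pa^2_{ij}v_0\|_T \leq \|\pa^2_{ij,g}v\|_T + \|\delta_g v\|_T.
\]
The first piece on the right, when summed over $T$ and over $i,j$, is bounded by $\3bar v\3bar$ directly from the definition \eqref{triple-bar}. All the work therefore goes into controlling $\|\delta_g v\|_T$ in terms of the two stabilizer jumps $Q_b v_0-v_b$ and $\pmb{Q_g}(\nabla v_0)-\pmb{v_g}$ that sit inside $s(v,v)$.

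For that bound I would invoke the defining identity \eqref{weak Hessian-2} with the particular choice $\varphi=\delta_g v\in P_n(T)$, which is allowed since $\delta_g v$ is by construction a polynomial of degree at most $n$. This yields
\[
\|\delta_g v\|_T^2 = \langle (Q_b v_0-v_b)n_i, \pa_j\delta_g v\rangle_{\pa T} - \langle Q_{gi}(\nabla v_0)-v_{gi}, \delta_g v\, n_j\rangle_{\pa T}.
\]
Applying Cauchy--Schwarz on $\pa T$ and then the polynomial inverse-trace inequality \eqref{inverse inequality} to $\pa_j\delta_g v$ (picking up $h_T^{-3/2}$) and to $\delta_g v$ (picking up $h_T^{-1/2}$) gives, after dividing by $\|\delta_g v\|_T$,
\[
\|\delta_g v\|_T \lesssim h_T^{-3/2}\|Q_b v_0 - v_b\|_{\pa T} + h_T^{-1/2}\|\pmb{Q_g}(\nabla v_0)-\pmb{v_g}\|_{\pa T}.
\]

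Squaring, summing over $T\in{\cal T}_h$ and $i,j=1,\ldots,d$, and inserting the artificial weights $h_T^{\pm\gamma_1}$ and $h_T^{\pm\gamma_2}$ to match the form of $s(v,v)$, I obtain
\[
\sum_{T\in{\cal T}_h}\sum_{i,j=1}^d\|\delta_g v\|_T^2 \lesssim (h^{-3-\gamma_1} + h^{-1-\gamma_2})\, s(v,v).
\]
Combining with the trivial estimate $\sum_T\sum_{ij}\|\pa^2_{ij,g}v\|_T^2\leq \3bar v\3bar^2$, using $s(v,v)\leq \3bar v\3bar^2$, and then taking square roots with the inequality $(a+b+c)^{1/2}\leq a^{1/2}+b^{1/2}+c^{1/2}$ produces the stated bound with factor $1+h^{(-3-\gamma_1)/2}+h^{(-1-\gamma_2)/2}$.

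The only delicate point is the duality step that recovers $\|\delta_g v\|_T$: one has to remember that $\delta_g v$ itself is a legal test function in $P_n(T)$ and then use an inverse inequality on the boundary term containing $\pa_j\delta_g v$ in order to absorb the factor $\|\delta_g v\|_T$ on the right-hand side. The remaining algebra (scaling powers of $h_T$ against $\gamma_1,\gamma_2$, splitting the square root) is routine.
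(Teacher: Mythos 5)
Your proposal is correct and follows essentially the same route as the paper: decompose $\pa^2_{ij}v_0=\pa^2_{ij,g}v-\delta_g v$, bound $\|\delta_g v\|_T$ through the defining identity \eqref{weak Hessian-2} together with the polynomial trace and inverse inequalities to pick up the factors $h_T^{-3/2}$ and $h_T^{-1/2}$, and then reweight by $h_T^{\gamma_1}$, $h_T^{\gamma_2}$ to absorb everything into $s(v,v)$. The only cosmetic difference is that you test with $\varphi=\delta_g v$ directly, whereas the paper writes the equivalent dual characterization $\|\delta_g v\|_T=\sup_{\varphi\in P_n(T)}|(\delta_g v,\varphi)_T|/\|\varphi\|_T$.
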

\begin{proof}
From \eqref{weak Hessian-1} and the triangle inequality, there holds
\begin{equation}\label{strong-Hessian-estimate-1}
\begin{split}
\Big(\sum_{T\in{\cal T}_h}\sum_{i,j=1}^d\|\pa^2_{ij}v_0\|_T^2\Big)^{\frac{1}{2}}
=&\Big(\sum_{T\in{\cal T}_h}\sum_{i,j=1}^d\|\pa^2_{ij,g}v-\delta_gv\|_T^2\Big)^{\frac{1}{2}}\\
\lesssim&\3barv\3bar+\Big(\sum_{T\in{\cal T}_h}\|\delta_gv\|_T^2\Big)^{\frac{1}{2}}.
\end{split}
\end{equation}
We use \eqref{weak Hessian-2}, the Cauchy-Schwarz inequality, the trace inequality \eqref{inverse inequality} and the inverse inequality to obtain
\begin{equation*}
\begin{split}
\|\delta_gv\|_T
=&\sup_{\forall\varphi\in P_n(T)}\frac{|(\delta_gv,\varphi)_T|}{\|\varphi\|_T}\\
=&\sup_{\forall\varphi\in P_n(T)}\frac{|\langle(Q_bv_0-v_b)n_i,\pa_j\varphi\rangle_{\pa T}
  -\langle Q_{gi}(\nabla v_0)-v_{gi},\varphi n_j\rangle_{\pa T}|}{\|\varphi\|_T}\\
\lesssim&\sup_{\forall\varphi\in P_n(T)}\frac{\|Q_bv_0-v_b\|_{\pa T}\|\pa_j\varphi\|_{\pa T}
  +\|Q_{gi}(\nabla v_0)-v_{gi}\|_{\pa T}\|\varphi\|_{\pa T}}{\|\varphi\|_T}\\
  \lesssim& h_T^{\frac{-3}{2}}\|Q_bv_0-v_b\|_{\pa T}
  +h_T^{\frac{-1}{2}}\|Q_{gi}(\nabla v_0)-v_{gi}\|_{\pa T}.
\end{split}
\end{equation*}
This gives
\begin{equation*}
\begin{split}
&\Big(\sum_{T\in{\cal T}_h}\|\delta_gv\|_T^2\Big)^{\frac{1}{2}}\\
\lesssim&h_T^{\frac{-3-\gamma_1}{2}}\Big(\rho_1h_T^{\gamma_1}\|Q_bv_0-v_b\|^2_{\pa T}\Big)^\frac{1}{2}+h_T^{\frac{-1-\gamma_2}{2}}\Big(\rho_2h_T^{\gamma_2}\|Q_{gi}(\nabla v_0)-v_{gi}\|^2_{\pa T}\Big)^\frac{1}{2}\\
\lesssim&(h^{\frac{-3-\gamma_1}{2}}+h^{\frac{-1-\gamma_2}{2}})\3barv\3bar,
\end{split}
\end{equation*}
which, together with \eqref{strong-Hessian-estimate-1}, leads to Lemma \ref{strong-Hessian-estimate}. This completes the proof.
\end{proof}

\begin{lemma}\label{strong-gradient-estimate}
For any $v\in V_h^0$, there holds
$$
\Big(\sum_{T\in{\cal T}_h}\|\nabla v_0\|^2_T\Big)^{\frac{1}{2}}\lesssim(1+h^{\frac{-3-\gamma_1}{2}}+h^{\frac{-1-\gamma_2}{2}})\3barv\3bar.
$$
\end{lemma}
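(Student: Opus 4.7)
The approach will parallel the proof of Lemma \ref{strong-Hessian-estimate}: start from a broken Poincar\'e--Friedrichs inequality for piecewise $H^1$ functions (applied component-wise to $\nabla v_0$), bound the resulting piecewise Hessian by Lemma \ref{strong-Hessian-estimate}, and reduce the remaining jump/boundary penalties to the stabilizer $s(v,v)$ by exploiting that $\pmb{v_g}$ is single-valued on interior faces of ${\cal T}_h$ and vanishes on $\pa\O$ since $v\in V_h^0$.

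First I would view each component $\pa_i v_0$ as a piecewise $H^1$ function on ${\cal T}_h$ and apply the standard broken Poincar\'e--Friedrichs inequality, giving after summation over $i=1,\ldots,d$ the bound
\[
\sum_{T\in{\cal T}_h}\|\nabla v_0\|_T^2\lesssim \sum_{T\in{\cal T}_h}\sum_{i,j=1}^d\|\pa_{ij}^2v_0\|_T^2+\sum_{e\in\E_h^0}h_e^{-1}\|[\nabla v_0]\|_e^2+\sum_{e\subset\pa\O}h_e^{-1}\|\nabla v_0\|_e^2.
\]
The first sum on the right is handled immediately by Lemma \ref{strong-Hessian-estimate}, which produces a factor of exactly the announced form $(1+h^{(-3-\gamma_1)/2}+h^{(-1-\gamma_2)/2})\3bar v\3bar$.

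Next, I would rewrite the penalty terms with the help of $\pmb{v_g}$. On an interior face $e=T_+\cap T_-$, single-valuedness of $\pmb{v_g}$ gives
\[
[\nabla v_0]_e=(\nabla v_0|_{T_+}-\pmb{v_g})-(\nabla v_0|_{T_-}-\pmb{v_g}),
\]
so $\|[\nabla v_0]\|_e\lesssim\sum_{\pm}\|\nabla v_0|_{T_\pm}-\pmb{v_g}\|_e$; on $e\subset\pa\O$ the condition $\pmb{v_g}=\pmb 0$ gives $\nabla v_0|_e=\nabla v_0-\pmb{v_g}$ for free. The $L^2$-orthogonality of $\pmb{Q_g}$ onto $[P_\ell(e)]^d$ then yields the Pythagorean identity
\[
\|\nabla v_0-\pmb{v_g}\|_e^2=\|(I-\pmb{Q_g})\nabla v_0\|_e^2+\|\pmb{Q_g}(\nabla v_0)-\pmb{v_g}\|_e^2,
\]
whose second summand matches precisely the integrand of the second stabilizer, and after the $h_T^{-1}$-scaling contributes exactly $\rho_2^{-1}h^{-1-\gamma_2}s(v,v)\lesssim h^{-1-\gamma_2}\3bar v\3bar^2$ to the right-hand side, in line with the stated bound.

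The hard part will be the projection-error term $\sum_{e}h_e^{-1}\|(I-\pmb{Q_g})\nabla v_0\|_e^2$, since the stabilizer sees only $\pmb{Q_g}(\nabla v_0)-\pmb{v_g}$. Because $\nabla v_0|_T\in [P_{k-1}(T)]^d$ so that its trace on a flat face $e$ lies in $[P_{k-1}(e)]^d$, this term vanishes whenever $\ell\geq k-1$; for general $\ell$ one must invoke a polynomial trace/inverse inequality of the form $\|(I-\pmb{Q_g})\nabla v_0\|_e\lesssim h_T^{-1/2}\|\nabla v_0\|_T$ and then carefully absorb the resulting multiple of $\sum_T\|\nabla v_0\|_T^2$ back into the left-hand side against the $h$-scaling already built into the bound (or, equivalently, use the identity $(I-\pmb{Q_g})\nabla v_0=(I-\pmb{Q_g})(\nabla v_0-\pmb{v_g})$ together with the Pythagorean relation above to reexpress the projection error in stabilizer-friendly terms). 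Assembling these estimates then delivers the claimed inequality.
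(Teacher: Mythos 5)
Your overall skeleton matches the paper's: a broken Poincar\'e-type inequality reduces $\sum_T\|\nabla v_0\|_T^2$ to the piecewise Hessian (controlled by Lemma \ref{strong-Hessian-estimate}) plus face penalties on $\nabla v_0-\pmb{v_g}$, and the piece $\pmb{Q_g}(\nabla v_0)-\pmb{v_g}$ is sent to the stabilizer with the factor $h^{-1-\gamma_2}$. The gap is in your treatment of the remaining projection-error term $\sum_e h_e^{-1}\|(I-\pmb{Q_g})\nabla v_0\|_e^2$, which you correctly single out as the hard part but then resolve incorrectly. Your first suggestion --- bound $\|(I-\pmb{Q_g})\nabla v_0\|_e\lesssim h_T^{-1/2}\|\nabla v_0\|_T$ and ``absorb'' the result into the left-hand side --- fails: after the $h_e^{-1}$ scaling this term becomes $h_T^{-2}\|\nabla v_0\|_T^2$, and a multiple of $\sum_T\|\nabla v_0\|_T^2$ whose coefficient blows up like $h^{-2}$ cannot be absorbed into $\sum_T\|\nabla v_0\|_T^2$ on the left. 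Your second suggestion --- use $(I-\pmb{Q_g})\nabla v_0=(I-\pmb{Q_g})(\nabla v_0-\pmb{v_g})$ together with the Pythagorean identity --- is circular: it merely returns $\|(I-\pmb{Q_g})\nabla v_0\|_e^2=\|\nabla v_0-\pmb{v_g}\|_e^2-\|\pmb{Q_g}(\nabla v_0)-\pmb{v_g}\|_e^2$, i.e.\ the quantity you started from, and produces no stabilizer term.

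The correct resolution, which is what the paper does, uses the \emph{approximation} property of $\pmb{Q_g}$ rather than an inverse estimate: since $\ell\geq0$, $\pmb{Q_g}$ reproduces constants on each face, so with $\bar c$ the mean of $\nabla v_0$ over $T$ one has
$$
\|(I-\pmb{Q_g})\nabla v_0\|_{\pa T}\leq\|\nabla v_0-\bar c\|_{\pa T}\lesssim h_T^{-\frac12}\|\nabla v_0-\bar c\|_T+h_T^{\frac12}\|\nabla(\nabla v_0)\|_T\lesssim h_T^{\frac12}\Big(\sum_{i,j=1}^d\|\pa_{ij}^2v_0\|_T^2\Big)^{\frac12}
$$
by the trace inequality \eqref{trace inequality} and the Poincar\'e inequality on $T$. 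Hence $h_T^{-1}\|(I-\pmb{Q_g})\nabla v_0\|_{\pa T}^2\lesssim\sum_{i,j=1}^d\|\pa_{ij}^2v_0\|_T^2$, which is again controlled by Lemma \ref{strong-Hessian-estimate} with the same factor $(1+h^{\frac{-3-\gamma_1}{2}}+h^{\frac{-1-\gamma_2}{2}})$, and no absorption is needed. (Your observation that the term vanishes when $\ell\geq k-1$ is correct but does not cover the general case the lemma asserts.) With this replacement your argument closes and essentially coincides with the paper's proof, which invokes the elementwise Poincar\'e inequality of Lemma A.4 in \cite{WW_bihar-2014} in place of your jump-based broken Poincar\'e--Friedrichs inequality.
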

\begin{proof}
By using the Poincar\'{e} inequality (see Lemma A.4 in \cite{WW_bihar-2014}), the trace inequality \eqref{inverse inequality}, the property of $\pmb{Q_g}$ and Lemma \ref{strong-Hessian-estimate}, there yields
\begin{equation*}
\begin{split}
\sum_{T\in{\cal T}_h}\|\nabla v_0\|_T^2
\lesssim&\sum_{T\in{\cal T}_h}\Big(\|\nabla(\nabla v_0)\|_{T}^2+h_T^{-1}\|\nabla v_0-\pmb{v_g}\|_{\pa T}^2\Big)\\
\lesssim&\sum_{T\in{\cal T}_h}\Big(\sum_{i,j=1}^d\|\pa_{ij}^2v_0\|_T^2
          +h_T^{-1}\|\nabla v_0-\pmb{Q_g}(\nabla v_0)\|_{\pa T}^2
        +h_{T}^{-1}\|\pmb{Q_g}(\nabla v_0)-\pmb{v_g}\|_{\pa T}^2\Big)\\
\lesssim&\sum_{T\in{\cal T}_h}\sum_{i,j=1}^d\|\pa_{ij}^2v_0\|_T^2+\sum_{T\in{\cal T}_h}h_T^{-1}h_T^{-1}\|\nabla v_0-\pmb{Q_g}(\nabla v_0)\|_{T}^2\\
&+h^{-1-\gamma_2}\Big(\sum_{T\in{\cal T}_h}\rho_2h_T^{\gamma_2}\|\pmb{Q_g}(\nabla v_0)-\pmb{v_g}\|_{\pa T}^2\Big)\\
\lesssim&\sum_{T\in{\cal T}_h}\sum_{i,j=1}^d\|\pa_{ij}^2v_0\|_T^2+\sum_{T\in{\cal T}_h}\sum_{i,j=1}^dh_T^{-2}h_T^{2}\|\pa_{ij}^2v_0\|_T^2
+h^{-1-\gamma_2}\3barv\3bar^2\\
\lesssim&(1+h^{\frac{-3-\gamma_1}{2}}+h^{\frac{-1-\gamma_2}{2}})^2\3barv\3bar^2+h^{-1-\gamma_2}\3barv\3bar^2\\
\lesssim&(1+h^{\frac{-3-\gamma_1}{2}}+h^{\frac{-1-\gamma_2}{2}})^2\3barv\3bar^2.
\end{split}
\end{equation*}
 This completes the proof of the lemma.
\end{proof}

\begin{lemma}\label{gradient property}
For $\theta\in[2,k]$, there holds
$$
\|\delta_gQ_h\phi\|_T\lesssim h_T^{\theta-1}\|\phi\|_{\theta+1,T}.
$$
\end{lemma}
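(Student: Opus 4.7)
The plan is to exploit the defining identity of $\delta_g$ (equation \eqref{weak Hessian-2}) evaluated at the weak function $Q_h\phi=\{Q_0\phi, Q_b\phi, \pmb{Q_g}(\nabla\phi)\}$ and reduce the estimate to approximation properties of $Q_0$ on $\partial T$. Specifically, I would start by taking the duality characterization
$$\|\delta_g Q_h\phi\|_T = \sup_{\varphi\in P_n(T)}\frac{|(\delta_g Q_h\phi,\varphi)_T|}{\|\varphi\|_T},$$
and then write, using \eqref{weak Hessian-2},
$$(\delta_g Q_h\phi,\varphi)_T = \langle(Q_b(Q_0\phi)-Q_b\phi)n_i,\partial_j\varphi\rangle_{\partial T} - \langle Q_{gi}(\nabla Q_0\phi)-Q_{gi}(\nabla\phi),\varphi n_j\rangle_{\partial T}.$$
By linearity of the $L^2$-projections $Q_b$ and $\pmb{Q_g}$ and their $L^2(\partial T)$-boundedness, the two boundary integrals can be bounded by $\|Q_0\phi-\phi\|_{\partial T}\|\partial_j\varphi\|_{\partial T}$ and $\|\nabla(Q_0\phi-\phi)\|_{\partial T}\|\varphi\|_{\partial T}$ respectively.

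Next, I would apply the polynomial trace/inverse inequality \eqref{inverse inequality} to the test polynomial $\varphi$, together with the standard inverse inequality on $T$, to obtain
$$\|\varphi\|_{\partial T}\lesssim h_T^{-1/2}\|\varphi\|_T, \qquad \|\partial_j\varphi\|_{\partial T}\lesssim h_T^{-3/2}\|\varphi\|_T.$$
On the other hand, for $Q_0\phi-\phi$ (which is not a polynomial), I would use the full trace inequality \eqref{trace inequality} combined with the local version of Lemma \ref{error projection} applied elementwise (with $\alpha=\theta$ and $t=0,1,2$) to get
$$\|Q_0\phi-\phi\|_{\partial T}\lesssim h_T^{\theta+1/2}\|\phi\|_{\theta+1,T},\qquad \|\nabla(Q_0\phi-\phi)\|_{\partial T}\lesssim h_T^{\theta-1/2}\|\phi\|_{\theta+1,T}.$$

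Combining these bounds, each of the two boundary terms contributes exactly a factor $h_T^{\theta-1}\|\phi\|_{\theta+1,T}\|\varphi\|_T$, and dividing by $\|\varphi\|_T$ yields the desired estimate. The only delicate point — which I would flag as the main bookkeeping step rather than a conceptual obstacle — is tracking the powers of $h_T$ so that the $-3/2$ lost on $\partial_j\varphi$ balances against the $+1/2$ gained in $\|Q_0\phi-\phi\|_{\partial T}$, and likewise for the gradient term; the condition $\theta\ge 2$ is exactly what makes the application of Lemma \ref{error projection} at the second-derivative level ($t=2$ inside the trace inequality for $\nabla(Q_0\phi-\phi)$) legitimate.
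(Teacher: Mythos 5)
Your proposal is correct and follows essentially the same route as the paper's proof: the duality characterization of $\|\delta_gQ_h\phi\|_T$, expansion via \eqref{weak Hessian-2}, the inverse trace inequality \eqref{inverse inequality} on the polynomial test function (yielding the $h_T^{-3/2}$ and $h_T^{-1/2}$ factors), and the full trace inequality \eqref{trace inequality} combined with Lemma \ref{error projection} on $Q_0\phi-\phi$. The bookkeeping of the powers of $h_T$ matches the paper's computation exactly.
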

\begin{proof}
From \eqref{weak Hessian-2}, Cauchy-Schwarz inequality, the trace  inequalities \eqref{trace inequality} and \eqref{inverse inequality}, the inverse inequality, and Lemma \ref{error projection}, one obtains
\begin{equation*}
\begin{split}
 &\|\delta_gQ_h\phi\|_T\\
=&\sup_{\forall\varphi\in P_n(T)}\frac{|(\delta_gQ_h\phi,\varphi)_T|}{\|\varphi\|_T}\\
=&\sup_{\forall\varphi\in P_n(T)}\frac{|\langle(Q_b(Q_0\phi)-Q_b\phi)n_i,\pa_j\varphi\rangle_{\pa T}
    -\langle Q_{gi}(\nabla Q_0\phi)-Q_{gi}(\nabla\phi),\varphi n_j\rangle_{\pa T}|}{\|\varphi\|_T}\\
\lesssim&\sup_{\forall\varphi\in P_n(T)}\frac{\|Q_0\phi-\phi\|_{\pa T}\|\pa_j\varphi\|_{\pa T}
    +\|\nabla Q_0\phi-\nabla\phi\|_{\pa T}\|\varphi\|_{\pa T}}{\|\varphi\|_T}\\
\lesssim&\sup_{\forall\varphi\in P_n(T)}\frac{h_T^{\frac{-3}{2}}\|Q_0\phi-\phi\|_{\pa T}\|\varphi\|_{T}
    +h_T^{\frac{-1}{2}}\|\nabla Q_0\phi-\nabla\phi\|_{\pa T}\|\varphi\|_{T}}{\|\varphi\|_T}\\
\lesssim&h_T^{\frac{-3}{2}}\Big(h_T^{-1}\|Q_0\phi-\phi\|_{T}^2+h_T|Q_0\phi-\phi|_{1,T}^2\Big)^{\frac{1}{2}}+
    h_T^{\frac{-1}{2}}\Big(h_T^{-1}|Q_0\phi-\phi|_{1,T}^2+h_T|Q_0\phi-\phi|_{2,T}^2\Big)^{\frac{1}{2}}\\
\lesssim&h_T^{\frac{-3}{2}}h_T^{\frac{2 \theta+1}{2}}\|\phi\|_{\theta+1,T}+h_T^{\frac{-1}{2}}h_T^{\frac{2\theta-1}{2}}\|\phi\|_{\theta+1,T}\\
\lesssim&h_T^{\theta-1}\|\phi\|_{\theta+1,T}.
\end{split}
\end{equation*}
This  completes the proof of the lemma.
\end{proof}

\begin{lemma}\label{energy-estimate-pre} Let $\theta\in[2,k]$ and $\nu\in[0,s]$.  For any $v\in V_h$, there holds  
\begin{equation}\label{eq1}
\begin{split}
 |s(Q_h\phi,v)|\lesssim (h^{\frac{2\theta+\gamma_1+1}{2}}+h^{\frac{2\theta+\gamma_2-1}{2}})\|\phi\|_{\theta+1}\3barv\3bar,
\end{split}
\end{equation}
\begin{equation}\label{eq2}
\begin{split}
|\sum_{T\in{\cal T}_h}\sum_{i,j=1}^d(Q_0\phi-\phi,\pa^2_{ji}(\mathbb{Q}_s\pa^2_{ij,g}v))_T|\lesssim h^{\theta-1}\|\phi\|_{\theta+1}\3barv\3bar,\end{split}
\end{equation}
\begin{equation}\label{eq3}
\begin{split}
|\sum_{T\in{\cal T}_h}\sum_{i,j=1}^d(\pa^2_{ij}v_0,(\mathbb{Q}_s-I)\pa^2_{ij}\phi)_T|
  \lesssim (h^{\nu+1}+h^{\frac{2\nu-\gamma_1-1}{2}}+h^{\frac{2\nu-\gamma_2+1}{2}})\|\phi\|_{\nu+3}\3barv\3bar,\end{split}
\end{equation}
\begin{equation}\label{eq4}
\begin{split}
 &|\sum_{T\in{\cal T}_h}\sum_{i,j=1}^d\langle(v_0-v_b)n_i,\pa_j(\mathbb{Q}_s\pa^2_{ij}\phi-\pa^2_{ij}\phi)\rangle_{\pa T}|\\&=\begin{cases} (h+h^{\frac{-1-\gamma_1}{2}}+h^{\frac{1-\gamma_2}{2}})\|\phi\|_4\3barv\3bar,  \mbox{if}~ s=0,\\
(h^{\frac{2\nu-\gamma_1-1}{2}}+h^{\nu+1}+h^{\frac{2\nu-\gamma_2+1}{2}})\|\phi\|_{\nu+3}\3barv\3bar, \mbox{if}~ s>0,
         \end{cases}\end{split}
\end{equation}
\begin{equation}\label{eq5}
\begin{split}
&|\sum_{T\in{\cal T}_h}\sum_{i,j=1}^d\langle \pa_iv_0-v_{gi},(I-\mathbb{Q}_s)\pa^2_{ij}\phi\cdot n_j\rangle_{\pa T}|\\
  &\lesssim (h^{\frac{2\nu+1-\gamma_2}{2}}+h^{\frac{2\nu-\gamma_1-1}{2}}+h^{\nu+1})\|\phi\|_{\nu+3}\3barv\3bar,
\end{split}
\end{equation}
\begin{equation}\label{eq6}
\begin{split}|\sum_{T\in{\cal T}_h}(\pa_{ij,g}^2Q_h\phi,(I-\mathbb{Q}_s)\pa_{ij,g}^2v)_T|\lesssim(h^{\theta-1}\|\phi\|_{\theta+1}+h^{\nu+1}\|\phi\|_{\nu+3})\3barv\3bar.
\end{split}
\end{equation}
\end{lemma}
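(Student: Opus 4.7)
The six bounds in Lemma \ref{energy-estimate-pre} all share the same structural scheme: apply Cauchy--Schwarz to decouple $\phi$-side factors from $v$-side factors, bound the $\phi$-side factors by projection estimates (Lemma \ref{error projection}) together with the trace inequality \eqref{trace inequality}, and bound the $v$-side factors either directly by $\3bar v\3bar$ via its definition \eqref{triple-bar}, or indirectly through the auxiliary Lemmas \ref{strong-Hessian-estimate} and \ref{gradient property}. The factor tracking of $h$ needs to be done carefully because different terms acquire different negative powers depending on whether $\gamma_1$ or $\gamma_2$ is present, or whether the inverse inequality \eqref{inverse inequality} is used to move norms between $\partial T$ and $T$.

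For \eqref{eq1}, I would expand the two pieces of $s(Q_h\phi,v)$, pull out $h_T^{\gamma_i/2}$ on each side so that the $v$-side factor is recognized as the stabilizer part of $\3bar v\3bar$; the $\phi$-side factors $\|Q_0\phi-\phi\|_{\partial T}$ and $\|\pmb{Q_g}\nabla(Q_0\phi-\phi)\|_{\partial T}$ then yield $h^{\theta+1/2}$ and $h^{\theta-1/2}$ respectively through the trace inequality and Lemma \ref{error projection}. For \eqref{eq2}, Cauchy--Schwarz gives $\|Q_0\phi-\phi\|_T\lesssim h^{\theta+1}\|\phi\|_{\theta+1,T}$ on one side and a factor of $\|\pa^2_{ji}(\mathbb{Q}_s\pa^2_{ij,g}v)\|_T$ on the other; the inverse inequality costs $h^{-2}$, then $\|\mathbb{Q}_s\pa^2_{ij,g}v\|_T\le\|\pa^2_{ij,g}v\|_T\le\3bar v\3bar$. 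For \eqref{eq3}, Cauchy--Schwarz combined with Lemma \ref{error projection} bounds the projection error by $h^{\nu+1}\|\phi\|_{\nu+3}$, and $\|\pa^2_{ij}v_0\|_T$ is bounded by Lemma \ref{strong-Hessian-estimate}.

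The two boundary terms \eqref{eq4} and \eqref{eq5} are the delicate ones, because the test-function side involves jumps like $v_0-v_b$ or $\pa_i v_0-v_{gi}$ which are not directly part of the triple bar norm. My plan is to insert the $L^2$ projections: writing $v_0-v_b=(v_0-Q_b v_0)+(Q_b v_0-v_b)$ splits the term into one piece controlled by the stabilizer (after pulling out $h_T^{\gamma_1/2}$) and one piece where $Q_b$-orthogonality lets us replace $\pa_j(\mathbb{Q}_s\pa^2_{ij}\phi-\pa^2_{ij}\phi)$ by its non-constant part, which can be estimated via the trace inequality and Lemma \ref{error projection}. Bounding $\|v_0-Q_bv_0\|_{\pa T}$ requires a trace inequality applied to $v_0-Q_bv_0$ together with $\|\nabla v_0\|_T$ from Lemma \ref{strong-gradient-estimate}. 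The case split at $s=0$ in \eqref{eq4} arises because $\pa_j(\mathbb{Q}_0\pa^2_{ij}\phi)=0$, so only the raw boundary term $\pa_j\pa^2_{ij}\phi$ survives and must be estimated via the trace inequality using $\phi\in H^4$, yielding the different power of $h$. The analogous decomposition $\pa_iv_0-v_{gi}=(\pa_iv_0-Q_{gi}\nabla v_0)+(Q_{gi}\nabla v_0-v_{gi})$ drives \eqref{eq5}. Finally, \eqref{eq6} uses \eqref{weak Hessian-1} to write $\pa^2_{ij,g}Q_h\phi=\pa^2_{ij}Q_0\phi+\delta_g Q_h\phi$; the first part pairs against $(I-\mathbb{Q}_s)\pa^2_{ij,g}v$ and is handled by Cauchy--Schwarz after inserting $\pm\pa^2_{ij}\phi$ (Lemma \ref{error projection} gives $h^{\nu+1}\|\phi\|_{\nu+3}$), while the $\delta_g Q_h\phi$ part is controlled in $L^2(T)$ by Lemma \ref{gradient property}, producing the $h^{\theta-1}\|\phi\|_{\theta+1}$ term. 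The main obstacle throughout is the bookkeeping of $h$-powers, especially in \eqref{eq4}--\eqref{eq5}, where a single misaligned exponent between $\gamma_1$, $\gamma_2$, and $\nu$ would spoil the estimates used later in the convergence analysis.
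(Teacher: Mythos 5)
Your plan matches the paper's proof essentially step for step: Cauchy--Schwarz to separate the $\phi$- and $v$-factors, Lemma \ref{error projection} with the trace inequalities \eqref{trace inequality}--\eqref{inverse inequality} on the $\phi$-side, the stabilizer part of $\3barv\3bar$ together with Lemmas \ref{strong-Hessian-estimate}, \ref{strong-gradient-estimate} and \ref{gradient property} on the $v$-side, the decompositions $v_0-v_b=(v_0-Q_bv_0)+(Q_bv_0-v_b)$ and $\pa_iv_0-v_{gi}=(\pa_iv_0-Q_{gi}\nabla v_0)+(Q_{gi}\nabla v_0-v_{gi})$ for \eqref{eq4}--\eqref{eq5}, and the same reason for the $s=0$ case split (namely $\pa_j(\mathbb{Q}_0\pa^2_{ij}\phi)=0$). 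The only cosmetic deviation is in \eqref{eq6}, where you insert $\pm\pa^2_{ij}\phi$ directly instead of routing through Lemma \ref{pre-error-equation}, but this produces the same three terms and the same bound.
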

\begin{proof}
As to \eqref{eq1},  using the Cauchy-Schwarz inequality, the trace inequality \eqref{trace inequality} and Lemma \ref{error projection} gives
\begin{equation*}
\begin{split}
 |s(Q_h\phi,v)|
=&|\sum_{T\in{\cal T}_h}\rho_1h_T^{\gamma_1}\langle Q_b(Q_0\phi)-Q_b\phi,Q_bv_0-v_b\rangle_{\pa T}\\
 &+\sum_{T\in{\cal T}_h}\rho_2 h_T^{\gamma_2}\langle\pmb{Q_g}(\nabla Q_0\phi)-\pmb{Q_g}(\nabla\phi),\pmb{Q_g}(\nabla v_0)-\pmb{v_g}\rangle_{\pa T}|\\
\lesssim&\Big(\sum_{T\in{\cal T}_h}\rho_1h_T^{\gamma_1}\|Q_0\phi-\phi\|_{\pa T}^2\Big)^{\frac{1}{2}}
         \Big(\sum_{T\in {\cal T}_h}\rho_1h_T^{\gamma_1}\|Q_bv_0-v_b\|_{\pa T}^2\Big)^{\frac{1}{2}}\\
&+\Big(\sum_{T\in{\cal T}_h}\rho_2h_T^{\gamma_2}\|\nabla Q_0\phi-\nabla\phi\|_{\pa T}^2\Big)^{\frac{1}{2}}
         \Big(\sum_{T\in{\cal T}_h}\rho_2h_T^{\gamma_2}\|\pmb{Q_g}(\nabla v_0)-\pmb{v_g}\|_{\pa T}^2\Big)^{\frac{1}{2}}\\
\lesssim&\Big(\sum_{T\in{\cal T}_h}h_T^{\gamma_1-1}\|Q_0\phi-\phi\|_{T}^2+h_T^{\gamma_1+1}|Q_0\phi-\phi|_{1,T}^2\Big)^{\frac{1}{2}}\3barv\3bar\\
&+\Big(\sum_{T\in{\cal T}_h}h_T^{\gamma_2-1}|Q_0\phi-\phi|_{1,T}^2+h_T^{\gamma_2+1}|Q_0\phi-\phi|_{2,T}^2\Big)^{\frac{1}{2}}\3barv\3bar\\
\lesssim&(h^{\frac{\gamma_1-1}{2}}h^{\theta+1}\|\phi\|_{\theta+1}+h^{\frac{\gamma_2-1}{2}}h^{\theta}\|\phi\|_{\theta+1})\3barv\3bar\\
\lesssim&(h^{\frac{2\theta+\gamma_1+1}{2}}+h^{\frac{2\theta+\gamma_2-1}{2}})\|\phi\|_{\theta+1}\3barv\3bar.
\end{split}
\end{equation*}
As to \eqref{eq2},  from the Cauchy-Schwarz inequality, Lemma \ref{error projection} and the inverse inequality, one has
\begin{equation*}
\begin{split}
&|\sum_{T\in{\cal T}_h}\sum_{i,j=1}^d(Q_0\phi-\phi,\pa^2_{ji}(\mathbb{Q}_s\pa^2_{ij,g}v))_T|\\
\lesssim&\Big(\sum_{T\in{\cal T}_h}\|Q_0\phi-\phi\|_T^2\Big)^{\frac{1}{2}}
         \Big(\sum_{T\in{\cal T}_h}\sum_{i,j=1}^d\|\pa^2_{ji}(\mathbb{Q}_s\pa^2_{ij,g}v)\|_T^2\Big)^{\frac{1}{2}}\\
\lesssim&h^{\theta+1}\|\phi\|_{\theta+1}\cdot h^{-2}\Big(\sum_{T\in{\cal T}_h}\sum_{i,j=1}^d\|\pa^2_{ij,g}v\|_T^2\Big)^{\frac{1}{2}}\\
\lesssim&h^{\theta-1}\|\phi\|_{\theta+1}\3barv\3bar.
\end{split}
\end{equation*}
As to \eqref{eq3},   we have from the Cauchy-Schwarz inequality, Lemmas \ref{error projection}-\ref{strong-Hessian-estimate} that
\begin{equation*}
\begin{split}
&|\sum_{T\in{\cal T}_h}\sum_{i,j=1}^d(\pa^2_{ij}v_0,(\mathbb{Q}_s-I)\pa^2_{ij}\phi)_T|\\
\lesssim&\Big(\sum_{T\in{\cal T}_h}\sum_{i,j=1}^d\|\pa^2_{ij}v_0\|_T^2\Big)^{\frac{1}{2}}
          \Big(\sum_{T\in{\cal T}_h}\sum_{i,j=1}^d\|(\mathbb{Q}_s-I)\pa^2_{ij}\phi\|_T^2\Big)^{\frac{1}{2}}\\
\lesssim&h^{\nu+1}(1+h^{\frac{-3-\gamma_1}{2}}+h^{\frac{-1-\gamma_2}{2}})\|\phi\|_{\nu+3}\3barv\3bar.
\end{split}
\end{equation*}
As to \eqref{eq4} for the case of  $s>0$,  it follows from the Cauchy-Schwarz inequality, the trace inequalities \eqref{trace inequality}-\eqref{inverse inequality}, Lemmas  \ref{error projection}-\ref{strong-Hessian-estimate} that
\begin{equation*}
\begin{split}
&|\sum_{T\in{\cal T}_h}\sum_{i,j=1}^d\langle(v_0-v_b)n_i,\pa_j(\mathbb{Q}_s\pa^2_{ij}\phi-\pa^2_{ij}\phi)\rangle_{\pa T}|\\
\lesssim&\Big(\sum_{T\in{\cal T}_h}\rho_1h_T^{\gamma_1}\|v_0-v_b\|_{\pa T}^2\Big)^{\frac{1}{2}}
          \Big(\sum_{T\in{\cal T}_h}\sum_{i,j=1}^dh_T^{-\gamma_1}\|\pa_j(\mathbb{Q}_s\pa^2_{ij}\phi-\pa^2_{ij}\phi)\|_{\pa T}^2\Big)^{\frac{1}{2}}\\
\lesssim&\Big(\sum_{T\in{\cal T}_h}\rho_1h_T^{\gamma_1}\|Q_bv_0-v_b\|_{\pa T}^2+\rho_1 h_T^{\gamma_1}\|v_0-Q_bv_0\|_{\pa T}^2\Big)^{\frac{1}{2}}\\
&\cdot\Big(\sum_{T\in{\cal T}_h}\sum_{i,j=1}^dh_T^{-\gamma_1-1}|\mathbb{Q}_s\pa^2_{ij}\phi-\pa^2_{ij}\phi|_{1,T}^2
+h_T^{-\gamma_1+1}|\mathbb{Q}_s\pa^2_{ij}\phi-\pa^2_{ij}\phi|_{2,T}^2\Big)^{\frac{1}{2}}\\
\lesssim&\Big(\3barv\3bar^2+\sum_{T\in{\cal T}_h}h_T^{\gamma_1}h_T^{-1}\|v_0-Q_bv_0\|_{T}^2\Big)^{\frac{1}{2}}
\Big(\sum_{T\in{\cal T}_h}\sum_{i,j=1}^dh_T^{-\gamma_1-1}h_T^{2\nu}\|\phi\|_{\nu+3,T}^2\Big)^{\frac{1}{2}}\\
\lesssim&\Big(\3barv\3bar^2+\sum_{T\in{\cal T}_h}h_T^{\gamma_1-1}h_T^{4}|v_0|_{2,T}^2\Big)^{\frac{1}{2}}h^{\frac{2\nu-\gamma_1-1}{2}}\|\phi\|_{\nu+3}\\
\lesssim&\Big(\3barv\3bar+h^{\frac{\gamma_1+3}{2}}(1+h^{\frac{-3-\gamma_1}{2}}+h^{\frac{-1-\gamma_2}{2}})\3barv\3bar\Big)
h^{\frac{2\nu-\gamma_1-1}{2}}\|\phi\|_{\nu+3}\\
\lesssim&h^{\frac{2\nu-1-\gamma_1}{2}}(1+h^{\frac{3+\gamma_1}{2}}+h^{\frac{2+\gamma_1-\gamma_2}{2}})\|\phi\|_{\nu+3}\3barv\3bar.
\end{split}
\end{equation*}
As to \eqref{eq4}  for the case of $s=0$, we use the Cauchy-Schwarz inequality, the trace inequalities \eqref{trace inequality}-\eqref{inverse inequality} and Lemma
\ref{strong-gradient-estimate} to obtain
\begin{equation*}
\begin{split}
 &|\sum_{T\in{\cal T}_h}\sum_{i,j=1}^d\langle(v_0-v_b)n_i,\pa_j(\mathbb{Q}_s\pa^2_{ij}\phi-\pa^2_{ij}\phi)\rangle_{\pa T}|\\
=&|\sum_{T\in{\cal T}_h}\sum_{i,j=1}^d\langle(v_0-v_b)n_i,\pa_j(\pa^2_{ij}\phi)\rangle_{\pa T}|\\
=&|\sum_{T\in{\cal T}_h}\sum_{i,j=1}^d\langle(v_0-Q_bv_0)n_i,\pa_j(\pa^2_{ij}\phi)\rangle_{\pa T}
   +\langle(Q_bv_0-v_b)n_i,\pa_j(\pa^2_{ij}\phi)\rangle_{\pa T}|\\
=&|\sum_{T\in{\cal T}_h}\sum_{i,j=1}^d\langle(v_0-Q_bv_0)n_i,(I-Q_b)\pa_j(\pa^2_{ij}\phi)\rangle_{\pa T}
   +\langle(Q_bv_0-v_b)n_i,\pa_j(\pa^2_{ij}\phi)\rangle_{\pa T}|\\
\lesssim&\Big(\sum_{T\in{\cal T}_h}\|v_0-Q_bv_0\|_{\pa T}^2\Big)^{\frac{1}{2}}\Big(\sum_{T\in{\cal T}_h}\sum_{i,j=1}^d\|(I-Q_b)\pa_j(\pa^2_{ij}\phi)\|_{\pa T}^2\Big)^{\frac{1}{2}}\\
&+\Big(\sum_{T\in{\cal T}_h}\rho_1h_T^{\gamma_1}\|Q_bv_0-v_b\|_{\pa T}^2\Big)^{\frac{1}{2}}\Big(\sum_{T\in{\cal T}_h}\sum_{i,j=1}^dh_T^{-\gamma_1}\|\pa_j(\pa^2_{ij}\phi)\|_{\pa T}^2\Big)^{\frac{1}{2}}\\
\lesssim&\Big(\sum_{T\in{\cal T}_h}h_T^{-1}\|v_0-Q_bv_0\|_{T}^2\Big)^{\frac{1}{2}}\Big(\sum_{T\in{\cal T}_h}\sum_{i,j=1}^dh_T^{-1}\|(I-Q_b)\pa_j(\pa^2_{ij}\phi)\|_{T}^2+h_T|(I-Q_b)\pa_j(\pa^2_{ij}\phi)|_{1,T}^2\Big)^{\frac{1}{2}}\\
&+\3barv\3bar\Big(\sum_{T\in{\cal T}_h}h_T^{-\gamma_1}h_T^{-1}|\phi|_{3,T}^2+h_T^{-\gamma_1}h_T|\phi|_{4,T}^2\Big)^{\frac{1}{2}}\\
\lesssim&\Big(\sum_{T\in{\cal T}_h}h_T^{-1}h_T^2|v_0|_{1,T}^2\Big)^{\frac{1}{2}}\Big(\sum_{T\in{\cal T}_h}h_T^{-1}h_T^2|\phi|_{4,T}^2\Big)^{\frac{1}{2}}+h^{\frac{-1-\gamma_1}{2}}\|\phi\|_{4}\3barv\3bar\\
\lesssim&h(1+h^{\frac{-3-\gamma_1}{2}}+h^{\frac{-1-\gamma_2}{2}})\|\phi\|_4\3barv\3bar+h^{\frac{-1-\gamma_1}{2}}\|\phi\|_4\3barv\3bar\\
\lesssim&(h+h^{\frac{-1-\gamma_1}{2}}+h^{\frac{1-\gamma_2}{2}})\|\phi\|_4\3barv\3bar.
\end{split}
\end{equation*}
As to \eqref{eq5},  it follows from the Cauchy-Schwarz inequality, the trace inequalities  \eqref{trace inequality}-\eqref{inverse inequality}, Lemmas \ref{error projection}-\ref{strong-Hessian-estimate} that
\begin{equation*}
\begin{split}
&|\sum_{T\in{\cal T}_h}\sum_{i,j=1}^d\langle\pa_iv_0-v_{gi},(I-\mathbb{Q}_s)\pa^2_{ij}\phi\cdot n_j\rangle_{\pa T}|\\
\lesssim&\Big(\sum_{T\in{\cal T}_h}\|\nabla v_0-\pmb{Q_g}(\nabla v_0)\|_{\pa T}^2+\|\pmb{Q_g}(\nabla v_0)-\pmb{v_g}\|_{\pa T}^2\Big)^{\frac{1}{2}}
           \Big(\sum_{T\in{\cal T}_h}\sum_{i,j=1}^d\|(I-\mathbb{Q}_s)\pa^2_{ij}\phi\|_{\pa T}^2\Big)^{\frac{1}{2}}\\
\lesssim&\Big(\sum_{T\in{\cal T}_h}h_T^{-1}\|\nabla v_0-\pmb{Q_g}(\nabla v_0)\|_{T}^2
+h_T^{-\gamma_2}\rho_2 h_T^{\gamma_2}\|\pmb{Q_g}(\nabla v_0)-\pmb{v_g}\|_{\pa T}^2\Big)^{\frac{1}{2}}\\
&\cdot\Big(\sum_{T\in{\cal T}_h}\sum_{i,j=1}^dh_T^{-1}\|(I-\mathbb{Q}_s)\pa^2_{ij}\phi\|_{T}^2
+h_T|(I-\mathbb{Q}_s)\pa^2_{ij}\phi|_{1,T}^2\Big)^{\frac{1}{2}}\\
\lesssim&\Big(\sum_{T\in{\cal T}_h}h_T^{-1}h_T^2|v_0|_{2,T}^2+h^{-\gamma_2}\3barv\3bar^2\Big)^{\frac{1}{2}}
\Big(\sum_{T\in{\cal T}_h}h_T^{-1}h_T^{2(\nu+1)}\|\phi\|_{\nu+3}^2\Big)^{\frac{1}{2}}\\
\lesssim&\Big(h^{\frac{1}{2}}(1+h^{\frac{-3-\gamma_1}{2}}+h^{\frac{-1-\gamma_2}{2}})+h^{\frac{-\gamma_2}{2}}\Big) \3barv\3bar\cdot h^{\frac{2\nu+1}{2}}\|\phi\|_{\nu+3}\\
\lesssim&(h^{\frac{2\nu+1-\gamma_2}{2}}+h^{\frac{2\nu-\gamma_1-1}{2}}+h^{\nu+1})\|\phi\|_{\nu+3}\3barv\3bar.
\end{split}
\end{equation*}
As to \eqref{eq6},  using  \eqref{weak Hessian-1}, Lemma \ref{pre-error-equation} with $\varphi=\mathbb{Q}_s\pa_{ij,g}^2v\in P_s(T)$, the Cauchy-Schwarz inequality, Lemma \ref{error projection}, Lemma \ref{gradient property} and the inverse inequality, we have
\begin{equation*}
\begin{split}
 &|\sum_{T\in{\cal T}_h}(\pa_{ij,g}^2Q_h\phi,(I-\mathbb{Q}_s)\pa_{ij,g}^2v)_T|\\
=&|\sum_{T\in{\cal T}_h}\sum_{i,j=1}^d(\pa^2_{ij}Q_0\phi+\delta_gQ_h\phi,\pa_{ij,g}^2v)_T-(\pa_{ij,g}^2Q_h\phi,\mathbb{Q}_s\pa_{ij,g}^2v)_T|\\
=&|\sum_{T\in{\cal T}_h}\sum_{i,j=1}^d(\pa^2_{ij}Q_0\phi+\delta_gQ_h\phi,\pa_{ij,g}^2v)_T-(\pa^2_{ij}\phi,\mathbb{Q}_s\pa_{ij,g}^2v)_T
   -(Q_0\phi-\phi,\pa^2_{ji}(\mathbb{Q}_s\pa_{ij,g}^2v))_T|\\
=&|\sum_{T\in{\cal T}_h}\sum_{i,j=1}^d(\pa^2_{ij}Q_0\phi-\pa^2_{ij}\phi,\pa_{ij,g}^2v)_T+(\pa^2_{ij}\phi-\mathbb{Q}_s\pa^2_{ij}\phi,\pa_{ij,g}^2v)_T
+(\delta_gQ_h\phi,\pa_{ij,g}^2v)_T\\
 &-(Q_0\phi-\phi,\pa^2_{ji}(\mathbb{Q}_s\pa_{ij,g}^2v))_T|\\
\lesssim&\Big(\sum_{T\in{\cal T}_h}\sum_{i,j=1}^d\|\pa^2_{ij}Q_0\phi-\pa^2_{ij}\phi\|_T^2\Big)^{\frac{1}{2}}\3barv\3bar
+\Big(\sum_{T\in{\cal T}_h}\sum_{i,j=1}^d\|\pa^2_{ij}\phi-\mathbb{Q}_s\pa^2_{ij}\phi\|_T^2\Big)^{\frac{1}{2}}\3barv\3bar\\
&+\Big(\sum_{T\in{\cal T}_h}\|\delta_gQ_h\phi\|_T^2\Big)^{\frac{1}{2}}\3barv\3bar
+\Big(\sum_{T\in{\cal T}_h}\|Q_0\phi-\phi\|_T^2\Big)^{\frac{1}{2}}
    \Big(\sum_{T\in{\cal T}_h}\sum_{i,j=1}^d\|\pa^2_{ji}(\mathbb{Q}_s\pa_{ij,g}^2v)\|_T^2\Big)^{\frac{1}{2}}\\
\lesssim&(h^{\theta-1}\|\phi\|_{\theta+1}+h^{\nu+1}\|\phi\|_{\nu+3}+h^{\theta-1}\|\phi\|_{\theta+1})\3barv\3bar\\
&+h^{\theta+1}\|\phi\|_{\theta+1}h^{-2}\Big(\sum_{T\in{\cal T}_h}\sum_{i,j=1}^d\|\pa_{ij,g}^2v\|_T^2\Big)^{\frac{1}{2}}\\
\lesssim&(h^{\theta-1}\|\phi\|_{\theta+1}+h^{\nu+1}\|\phi\|_{\nu+3})\3barv\3bar.
\end{split}
\end{equation*}

This completes the proof of the lemma.
\end{proof}

\begin{lemma}\label{L2-estimate-pre}
 Let $t_0=\min\{k,3\}$. For any $\Psi\in H^4(\O)$, there holds 
\begin{equation}\label{q1}
\begin{split}
 |s(Q_hu,Q_h\Psi)|\lesssim (h^{k+t_0+\gamma_1+1}+h^{k+t_0+\gamma_2-1})\|u\|_{k+1}\|\Psi\|_{4},\\
\end{split}
\end{equation}
\begin{equation}\label{q2}
\begin{split} |\sum_{T\in{\cal T}_h}\sum_{i,j=1}^d(Q_0u-u,\pa^2_{ji}(\mathbb{Q}_s\pa_{ij,g}^2Q_h\Psi))_T|\lesssim (h^{k+1}+h^{k+t_0-2}) \|u\|_{k+1}\|\Psi\|_{4}, \\
\end{split}
\end{equation}
\begin{equation}\label{q3}
\begin{split} |\sum_{T\in{\cal T}_h}\sum_{i,j=1}^d(\pa^2_{ij}Q_0\Psi,(\mathbb{Q}_s-I)\pa^2_{ij}u)_T|\lesssim h^{s+t_0} \|u\|_{s+3}\|\Psi\|_{4},\\
\end{split}
\end{equation}
\begin{equation}\label{q4}
\begin{split} &|\sum_{T\in{\cal T}_h}\sum_{i,j=1}^d\langle(Q_0\Psi-Q_b\Psi)n_i,\pa_j(\mathbb{Q}_s-I)\pa^2_{ij}u\rangle_{\pa T}|\\
&\lesssim\begin{cases} h^{t_0}\|u\|_{4}\|\Psi\|_{4},& \mbox{if}~ s=0,\\
h^{s+t_0} \|u\|_{s+3}\|\Psi\|_{4},&  \mbox{if}~ s>0,
         \end{cases}\\
\end{split}
\end{equation}
\begin{equation}\label{q5}
\begin{split} |\sum_{T\in{\cal T}_h}\sum_{i,j=1}^d\langle\pa_iQ_0\Psi-Q_{gi}(\nabla\Psi),(I-\mathbb{Q}_s)\pa^2_{ij}u\cdot n_j\rangle_{\pa T}|\lesssim h^{s+t_0} \|u\|_{s+3}\|\Psi\|_{4}, 
\end{split}
\end{equation}
\begin{equation}\label{q6}
\begin{split} &|\sum_{T\in{\cal T}_h}\sum_{i,j=1}^d(\pa^2_{ij,g}Q_hu,(I-\mathbb{Q}_s)\pa_{ij,g}^{2}Q_h\Psi)_T|\\&\lesssim\begin{cases} (h^{k}\|u\|_{k+1}+h^{2}\|u\|_{3})(1+h^{t_0-2})\|\Psi\|_{4},& \mbox{if}~ s=0,\\ ((h^{k+1}+h^{k+t_0-2})\|u\|_{k+1}+(h^{s+3}+h^{s+t_0})\|u\|_{s+3})\|\Psi\|_{4},&  \mbox{if}~ s>0. 
         \end{cases}
\end{split}
\end{equation}
\end{lemma}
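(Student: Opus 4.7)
The six bounds \eqref{q1}--\eqref{q6} are structurally parallel to the six bounds of Lemma \ref{energy-estimate-pre}, but with the generic test function $v$ replaced by the projection $Q_h\Psi$ of a dual variable $\Psi\in H^4(\O)$. Consequently, the plan is to apply Cauchy--Schwarz to split each integral into a factor depending only on $u$ (to be controlled by projection estimates producing a positive power of $h$ together with $\|u\|_{k+1}$ or $\|u\|_{s+3}$) and a factor depending on $Q_h\Psi$ (to be controlled by $\|\Psi\|_4$ using only the finite $H^4$-regularity of the dual). The recurring tools are the trace and inverse inequalities \eqref{trace inequality}--\eqref{inverse inequality}, the projection error estimates of Lemma \ref{error projection}, the identity of Lemma \ref{pre-error-equation}, and the bound on $\delta_g Q_h\phi$ from Lemma \ref{gradient property}.

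\textbf{Bound-by-bound sketch.} For \eqref{q1} I would Cauchy--Schwarz the stabilizer form and estimate the $u$-factor with $\alpha=k$ and the $\Psi$-factor with $\alpha=t_0=\min\{k,3\}$ in Lemma \ref{error projection}, capped by $\|\Psi\|_4$. For \eqref{q2}, after Cauchy--Schwarz, I would bound $\|Q_0u-u\|_T$ by $h^{k+1}\|u\|_{k+1}$ and use the inverse inequality to reduce $\pa^2_{ji}(\mathbb{Q}_s\pa^2_{ij,g}Q_h\Psi)$ to $\pa^2_{ij,g}Q_h\Psi$, which is in turn controlled via \eqref{weak Hessian-1} together with Lemma \ref{gradient property}. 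For \eqref{q3} the key step is orthogonality of $\mathbb{Q}_s$: decompose $\pa^2_{ij}Q_0\Psi=\pa^2_{ij}(Q_0\Psi-\Psi)+\pa^2_{ij}\Psi$ and then replace $\pa^2_{ij}\Psi$ by $(I-\mathbb{Q}_s)\pa^2_{ij}\Psi$ at no cost before applying Lemma \ref{error projection} to both factors.

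\textbf{Boundary and mixed terms.} For \eqref{q4} and \eqref{q5} I would follow the template of \eqref{eq4}--\eqref{eq5}, inserting the projections $Q_b$ and $\pmb{Q_g}$ into each jump-like factor to split it into a $\Psi$-piece controlled by $\|Q_0\Psi-Q_b\Psi\|_{\pa T}$ or $\|\nabla Q_0\Psi-\pmb{Q_g}(\nabla\Psi)\|_{\pa T}$ and a $u$-piece controlled by $(I-\mathbb{Q}_s)\pa^2_{ij}u$; the trace inequality and Lemma \ref{error projection} then produce the advertised powers of $h$. The case $s=0$ of \eqref{q4} needs separate treatment, as in Lemma \ref{energy-estimate-pre}: an additional $Q_b$ must be inserted on $\pa_j\pa^2_{ij}u$ and the full trace inequality \eqref{trace inequality} (not the polynomial inverse form \eqref{inverse inequality}) used to recover the missing $h$-factor. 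For \eqref{q6}, I would use \eqref{weak Hessian-1} to write $\pa^2_{ij,g}Q_hu=\pa^2_{ij}Q_0u+\delta_gQ_hu$ and then apply Lemma \ref{pre-error-equation} with $\varphi=\mathbb{Q}_s\pa^2_{ij,g}Q_h\Psi$, producing four Cauchy--Schwarz pieces: a Hessian approximation error in $u$, the $\mathbb{Q}_s$-projection error of $\pa^2_{ij}u$, a $\delta_gQ_hu$ contribution bounded by Lemma \ref{gradient property}, and a $(Q_0u-u)$ term paired with an inverse inequality on $\pa^2_{ji}(\mathbb{Q}_s\pa^2_{ij,g}Q_h\Psi)$.

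\textbf{Main obstacle.} The intricate bookkeeping in \eqref{q6} is what I expect to be the sharpest step, since four pieces arise and each must be separately matched to the $s=0$ and $s>0$ cases as well as to the $t_0$-threshold arising from the $H^4$ ceiling on $\Psi$. The $s=0$ case should be the most delicate: there the orthogonality of $\mathbb{Q}_s$ provides no extra $h$-gain, so the piece involving $\pa^2_{ij}\phi-\mathbb{Q}_s\pa^2_{ij}\phi$ contributes only $h^2\|u\|_3$ instead of the cleaner $h^{k+1}\|u\|_{k+1}$, and it is precisely here that the bifurcation in the right-hand side of \eqref{q6} comes from.
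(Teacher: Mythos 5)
Your treatment of \eqref{q1}, \eqref{q3}, \eqref{q4} and \eqref{q5} matches the paper's proof in substance. (One small remark on \eqref{q4} with $s=0$: the paper does not insert $Q_b$ on $\pa_j\pa^2_{ij}u$ there. Unlike in \eqref{eq4}, the first factor $Q_0\Psi-Q_b\Psi$ is already small, contributing $h^{t_0+1/2}$ on $\pa T$, so a direct Cauchy--Schwarz combined with the trace inequality \eqref{trace inequality} applied to $\pa_j\pa^2_{ij}u\in H^1(T)$ yields $h^{t_0}\|u\|_4\|\Psi\|_4$; your extra insertion is harmless but unnecessary.)

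The gap is in \eqref{q2} and, more seriously, in \eqref{q6}: in both you discard the extra positive powers of $h$ that the $\Psi$-dependent factor must contribute, and without them the duality argument yields nothing beyond the $H^2$ rate. In \eqref{q2}, reducing $\pa^2_{ji}(\mathbb{Q}_s\pa^2_{ij,g}Q_h\Psi)$ to $\pa^2_{ij,g}Q_h\Psi$ by a raw inverse inequality costs $h^{-2}$ against the $L^2$ norm of $\pa^2_{ij,g}Q_h\Psi$, which is only $O(\|\Psi\|_4)$; this produces $h^{k-1}\|u\|_{k+1}$ rather than the claimed $h^{k+1}+h^{k+t_0-2}$. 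The paper instead bounds $\|\pa^2_{ji}(\mathbb{Q}_sW)\|_T$ by the seminorm $|W|_{2,T}$ with $W=\pa^2_{ij,g}Q_h\Psi$ and applies the $h^{-2}$ inverse inequality only to the $\delta_gQ_h\Psi$ component, so that $|\pa^2_{ij}Q_0\Psi|_{2,T}\lesssim\|\Psi\|_{4,T}$ carries no negative power. In \eqref{q6} the defect is structural: routing the term through Lemma \ref{pre-error-equation} exactly as in \eqref{eq6} leaves three of your four pieces paired with the full $\pa^2_{ij,g}Q_h\Psi$, whose $L^2$ norm is $O(\|\Psi\|_4)$ with no $h$-gain, so those pieces give only $h^{k-1}\|u\|_{k+1}$ and $h^{s+1}\|u\|_{s+3}$ --- short of the claimed $h^{k+1}+h^{k+t_0-2}$ and $h^{s+3}+h^{s+t_0}$ by precisely the factor $h^{t_1+1}$, $t_1=\min\{s,1\}$, that your own ``main obstacle'' paragraph appeals to. The paper does not use Lemma \ref{pre-error-equation} here: it expands only the first slot as $\pa^2_{ij}(Q_0u-u)+(I-\mathbb{Q}_s)\pa^2_{ij}u+\mathbb{Q}_s\pa^2_{ij}u+\delta_gQ_hu$, kills the third summand by orthogonality against $(I-\mathbb{Q}_s)\pa^2_{ij,g}Q_h\Psi$, and keeps the second slot intact so that $\|(I-\mathbb{Q}_s)\pa^2_{ij,g}Q_h\Psi\|_T\lesssim h^{t_1+1}|\pa^2_{ij,g}Q_h\Psi|_{t_1+1,T}$ supplies the missing powers. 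Without this rearrangement the bound on $\zeta_u(Q_h\Psi)$, and hence the final $L^2$ estimate of Theorem \ref{THM:L2-estimate}, degenerates to the $H^2$ convergence rate.
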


\begin{proof}
As to \eqref{q1}, using the Cauchy-Schwarz inequality, the trace inequality \eqref{trace inequality} and Lemma \ref{error projection}, there holds
\begin{equation*}\label{error-equation-3}
\begin{split}
 &|s(Q_hu,Q_h\Psi)|\\
=&|\sum_{T\in{\cal T}_h}\rho_1h_T^{\gamma_1}\langle Q_b(Q_0u)-Q_bu,Q_b(Q_0\Psi)-Q_b\Psi\rangle_{\pa T}\\
 &+\sum_{T\in{\cal T}_h}\rho_2h_T^{\gamma_2}\langle\pmb{Q_g}(\nabla Q_0u)-\pmb{Q_g}(\nabla u),\pmb{Q_g}(\nabla Q_0\Psi)-\pmb{Q_g}(\nabla\Psi)
  \rangle_{\pa T}|\\
\lesssim&\Big(\sum_{T\in{\cal T}_h}h_T^{\gamma_1}\|Q_0u-u\|_{\pa T}^2\Big)^{\frac{1}{2}}
     \Big(\sum_{T\in{\cal T}_h}h_T^{\gamma_1}\|Q_0\Psi-\Psi\|_{\pa T}^2\Big)^{\frac{1}{2}}\\
 &+\Big(\sum_{T\in{\cal T}_h}h_T^{\gamma_2}\|\nabla Q_0u-\nabla u\|_{\pa T}^2\Big)^{\frac{1}{2}}
    \Big(\sum_{T\in{\cal T}_h}h_T^{\gamma_2}\|\nabla Q_0\Psi-\nabla\Psi\|_{\pa T}^2\Big)^{\frac{1}{2}}\\
\lesssim&\Big(\sum_{T\in{\cal T}_h}h_T^{\gamma_1-1}h_T^{2(k+1)}\|u\|_{k+1,T}^2\Big)^{\frac{1}{2}}
     \Big(\sum_{T\in{\cal T}_h}h_T^{\gamma_1-1}h_T^{2(t_0+1)}\|\Psi\|_{t_0+1,T}^2\Big)^{\frac{1}{2}}\\
 &+\Big(\sum_{T\in{\cal T}_h}h_T^{\gamma_2-1}h_T^{-2}h_T^{2(k+1)}\|u\|_{k+1,T}^2\Big)^{\frac{1}{2}}
     \Big(\sum_{T\in{\cal T}_h}h_T^{\gamma_2-1}h_T^{-2}h_T^{2(t_0+1)}\|\Psi\|_{t_0+1,T}^2\Big)^{\frac{1}{2}}\\
\lesssim&h^{k+t_0+\gamma_1+1}\|u\|_{k+1}\|\Psi\|_{t_0+1}+h^{k+t_0+\gamma_2-1}\|u\|_{k+1}\|\Psi\|_{t_0+1}\\
\lesssim&(h^{k+t_0+\gamma_1+1}+h^{k+t_0+\gamma_2-1})\|u\|_{k+1}\|\Psi\|_{4}.
\end{split}
\end{equation*}
As to \eqref{q2}, we use the Cauchy-Schwarz inequality, Lemma \ref{error projection}, \eqref{weak Hessian-1}, the inverse inequality and Lemma \ref{gradient property} to obtain
\begin{equation*}\label{error-equation-4}
\begin{split}
&|\sum_{T\in{\cal T}_h}\sum_{i,j=1}^d(Q_0u-u,\pa^2_{ji}(\mathbb{Q}_s\pa_{ij,g}^2Q_h\Psi))_T|\\
\lesssim&\Big(\sum_{T\in{\cal T}_h}\|Q_0u-u\|_T^2\Big)^{\frac{1}{2}}
           \Big(\sum_{T\in{\cal T}_h}\sum_{i,j=1}^{d}\|\pa^2_{ji}(\mathbb{Q}_s\pa_{ij,g}^2Q_h\Psi)\|_T^2\Big)^{\frac{1}{2}}\\
\lesssim&h^{k+1}\|u\|_{k+1}\Big(\sum_{T\in{\cal T}_h}\sum_{i,j=1}^d|\pa^2_{ij,g}Q_h\Psi|_{2,T}^2\Big)^{\frac{1}{2}}\\
\lesssim&h^{k+1}\|u\|_{k+1}\Big(\sum_{T\in{\cal T}_h}\sum_{i,j=1}^d|\pa^2_{ij}Q_0\Psi|_{2,T}^2+|\delta_gQ_h\Psi|_{2,T}^2\Big)^{\frac{1}{2}}\\
\lesssim&h^{k+1}\|u\|_{k+1}\Big(\sum_{T\in{\cal T}_h}\sum_{i,j=1}^d|\pa^2_{ij}Q_0\Psi|_{2,T}^2+h_T^{-4}\|\delta_gQ_h\Psi\|_{T}^2\Big)^{\frac{1}{2}}\\
\lesssim&h^{k+1}\|u\|_{k+1}\Big(\|\Psi\|_{4}^2+\sum_{T\in{\cal T}_h}h_T^{-4}h_T^{2(t_0-1)}\|\Psi\|_{t_0+1,T}^2\Big)^{\frac{1}{2}}\\
\lesssim&h^{k+1}(1+h^{t_0-3})\|u\|_{k+1}\|\Psi\|_{4}.
\end{split}
\end{equation*}
As to \eqref{q3}, from the Cauchy-Schwarz inequality and Lemma \ref{error projection}, we get
\begin{equation*}\label{error-equation-5}
\begin{split}
 &|\sum_{T\in{\cal T}_h}\sum_{i,j=1}^d(\pa^2_{ij}Q_0\Psi,(\mathbb{Q}_s-I)\pa^2_{ij}u)_T|\\
=&|\sum_{T\in{\cal T}_h}\sum_{i,j=1}^d((I-\mathbb{Q}_s)\pa^2_{ij}Q_0\Psi,(\mathbb{Q}_s-I)\pa^2_{ij}u)_T|\\
\lesssim&\Big(\sum_{T\in{\cal T}_h}\sum_{i,j=1}^d\|(I-\mathbb{Q}_s)\pa^2_{ij}Q_0\Psi\|_T^2\Big)^{\frac{1}{2}}
\Big(\sum_{T\in{\cal T}_h}\sum_{i,j=1}^d\|(\mathbb{Q}_s-I)\pa^2_{ij}u\|_T^2\Big)^{\frac{1}{2}}\\
\lesssim&h^{t_0-1}\|Q_0\Psi\|_{t_0+1}\cdot h^{s+1}\|u\|_{s+3}\\
\lesssim&h^{s+t_0}\|u\|_{s+3}\|\Psi\|_{4}.
\end{split}
\end{equation*}
As to \eqref{q4} for the case of  $s>0$,  from the Cauchy-Schwarz inequality, the trace inequality \eqref{trace inequality} and Lemma \ref{error projection}, we have
\begin{equation*}\label{error-equation-6-1}
\begin{split}
    &|\sum_{T\in{\cal T}_h}\sum_{i,j=1}^d\langle(Q_0\Psi-Q_b\Psi)n_i,\pa_j(\mathbb{Q}_s-I)\pa^2_{ij}u\rangle_{\pa T}|\\
\lesssim&\Big(\sum_{T\in{\cal T}_h}\|Q_0\Psi-\Psi\|_{\pa T}^2\Big)^{\frac{1}{2}}
        \Big(\sum_{T\in{\cal T}_h}\sum_{i,j=1}^d\|\pa_j(\mathbb{Q}_s-I)\pa^2_{ij}u\|_{\pa T}^2\Big)^{\frac{1}{2}}\\
\lesssim&\Big(\sum_{T\in{\cal T}_h}h_T^{-1}\|Q_0\Psi-\Psi\|_{T}^2+h_T\|\nabla(Q_0\Psi-\Psi)\|_{T}^2\Big)^{\frac{1}{2}}\\
&\cdot\Big(\sum_{T\in{\cal T}_h}\sum_{i,j=1}^dh_T^{-1}|(\mathbb{Q}_s-I)\pa^2_{ij}u|_{1,T}^2
+h_T|(\mathbb{Q}_s-I)\pa^2_{ij}u|_{2,T}^2\Big)^{\frac{1}{2}}\\
\lesssim&\Big(\sum_{T\in{\cal T}_h}h_T^{-1}h_T^{2(t_0+1)}\|\Psi\|_{t_0+1,T}^2\Big)^{\frac{1}{2}}
   \Big(\sum_{T\in{\cal T}_h}h_T^{-1}h_T^{-2}h_T^{2s+2}\|u\|_{s+3,T}^2\Big)^{\frac{1}{2}}\\
\lesssim&h^{t_0+s}\|u\|_{s+3}\|\Psi\|_{4}.
\end{split}
\end{equation*}
As to \eqref{q4}  for  the case of $s=0$, it follows from the Cauchy-Schwarz inequality, the trace inequality
\eqref{trace inequality} and Lemma \ref{error projection} that
\begin{equation*}\label{error-equation-6-2}
\begin{split}
    &|\sum_{T\in{\cal T}_h}\sum_{i,j=1}^d\langle(Q_0\Psi-Q_b\Psi)n_i,\pa_j(\mathbb{Q}_s-I)\pa^2_{ij}u\rangle_{\pa T}|\\
=&|\sum_{T\in{\cal T}_h}\sum_{i,j=1}^d\langle(Q_0\Psi-Q_b\Psi)n_i,\pa_j\pa^2_{ij}u\rangle_{\pa T}|\\
\lesssim&\Big(\sum_{T\in{\cal T}_h}\|Q_0\Psi-\Psi\|_{\pa T}^2\Big)^{\frac{1}{2}}
        \Big(\sum_{T\in{\cal T}_h}\sum_{i,j=1}^d\|\pa_j\pa^2_{ij}u\|_{\pa T}^2\Big)^{\frac{1}{2}}\\
\lesssim&\Big(\sum_{T\in{\cal T}_h}h_T^{-1}\|Q_0\Psi-\Psi\|_{T}^2+h_T|Q_0\Psi-\Psi|_{1,T}^2\Big)^{\frac{1}{2}}
\Big(\sum_{T\in{\cal T}_h}h_T^{-1}|u|_{3,T}^2+h_T|u|_{4,T}^2\Big)^{\frac{1}{2}}\\
\lesssim&\Big(\sum_{T\in{\cal T}_h}h_T^{-1}h_T^{2(t_0+1)}\|\Psi\|_{t_0+1,T}^2\Big)^{\frac{1}{2}}
   \Big(\sum_{T\in{\cal T}_h}h_T^{-1}\|u\|_{4,T}^2\Big)^{\frac{1}{2}}\\
\lesssim&h^{t_0}\|u\|_{4}\|\Psi\|_{4}.
\end{split}
\end{equation*}
As to \eqref{q5}, we have from the Cauchy-Schwarz inequality, the trace inequality  \eqref{trace inequality} and Lemma \ref{error projection} that
\begin{equation*}\label{error-equation-66}
\begin{split}
    &|\sum_{T\in{\cal T}_h}\sum_{i,j=1}^d\langle\pa_iQ_0\Psi-Q_{gi}(\nabla\Psi),(I-\mathbb{Q}_s)\pa^2_{ij}u\cdot n_j\rangle_{\pa T}|\\
\lesssim&\Big(\sum_{T\in{\cal T}_h}\|\nabla Q_0\Psi-\nabla\Psi\|_{\pa T}^2\Big)^{\frac{1}{2}}
        \Big(\sum_{T\in{\cal T}_h}\sum_{i,j=1}^d\|(I-\mathbb{Q}_s)\pa^2_{ij}u\|_{\pa T}^2\Big)^{\frac{1}{2}}\\
\lesssim&\Big(\sum_{T\in{\cal T}_h}h_T^{-1}|Q_0\Psi-\Psi|_{1,T}^2+h_T|Q_0\Psi-\Psi|_{2,T}^2\Big)^{\frac{1}{2}}\\
&\cdot\Big(\sum_{T\in{\cal T}_h}\sum_{i,j=1}^dh_T^{-1}\|(I-\mathbb{Q}_s)\pa^2_{ij}u\|_{T}^2+h_T|(I-\mathbb{Q}_s)\pa^2_{ij}u|_{1,T}^2\Big)^{\frac{1}{2}}\\
\lesssim&\Big(\sum_{T\in{\cal T}_h}h_T^{-1}h_T^{-2}h_T^{2(t_0+1)}\|\Psi\|_{t_0+1,T}^2\Big)^{\frac{1}{2}}
   \Big(\sum_{T\in{\cal T}_h}h_T^{-1}h_T^{2s+2}\|u\|_{s+3,T}^2\Big)^{\frac{1}{2}}\\
\lesssim&h^{t_0+s}\|u\|_{s+3}\|\Psi\|_{4}.
\end{split}
\end{equation*}
As to \eqref{q6}, let $t_1=\min\{s,1\}$. Using \eqref{weak Hessian-1}, the Cauchy-Schwarz inequality, Lemma  \ref{error projection},  Lemma \ref{gradient property}, and the inverse inequality  yields
\begin{equation*}\label{error-equation-7}
\begin{split}
 &|\sum_{T\in{\cal T}_h}\sum_{i,j=1}^d(\pa^2_{ij,g}Q_hu,(I-\mathbb{Q}_s)\pa_{ij,g}^{2}Q_h\Psi)_T|\\
=&|\sum_{T\in{\cal T}_h}\sum_{i,j=1}^d(\pa^2_{ij}Q_0u+\delta_{g}Q_hu,(I-\mathbb{Q}_s)\pa_{ij,g}^{2}Q_h\Psi)_{T}|\\
=&|\sum_{T\in{\cal T}_h}\sum_{i,j=1}^d(\pa^2_{ij}(Q_0u-u)+(I-\mathbb{Q}_s)\pa_{ij}^2u+\delta_gQ_hu,(I-\mathbb{Q}_s)\pa_{ij,g}^2Q_h\Psi)_T|\\
\lesssim&\Big(\sum_{T\in{\cal T}_h}\sum_{i,j=1}^d\|\pa^2_{ij}(Q_0u-u)\|_T^2
          +\|(I-\mathbb{Q}_s)\pa_{ij}^2u\|_T^2+\|\delta_gQ_hu\|_T^2\Big)^{\frac{1}{2}}\\
&\cdot\Big(\sum_{T\in{\cal T}_h}\sum_{i,j=1}^d\|(I-\mathbb{Q}_s)\pa_{ij,g}^2Q_h\Psi\|_T^2\Big)^{\frac{1}{2}}\\
\lesssim&\Big(\sum_{T\in{\cal T}_h}h_T^{2(k-1)}\|u\|_{k+1,T}^2+h_T^{2(s+1)}\|u\|_{s+3,T}^2\Big)^{\frac{1}{2}}
          \Big(\sum_{T\in{\cal T}_h}\sum_{i,j=1}^dh_T^{2(t_1+1)}|\pa_{ij,g}^{2}Q_h\Psi|_{t_1+1,T}^2\Big)^{\frac{1}{2}}\\
\lesssim&(h^{k-1}\|u\|_{k+1}+h^{s+1}\|u\|_{s+3})h^{t_1+1}\Big(\sum_{T\in{\cal T}_h}\sum_{i,j=1}^d|\pa^2_{ij}Q_0\Psi
          +\delta_gQ_h\Psi|_{t_1+1,T}^2\Big)^{\frac{1}{2}}\\
\lesssim&(h^{k-1}\|u\|_{k+1}+h^{s+1}\|u\|_{s+3})h^{t_1+1}\Big(\sum_{T\in{\cal T}_h}\sum_{i,j=1}^d|\pa^2_{ij}Q_0\Psi|_{t_1+1,T}^2
          +|\delta_gQ_h\Psi|_{t_1+1,T}^2\Big)^{\frac{1}{2}}\\
\lesssim&(h^{k+t_1}\|u\|_{k+1}+h^{s+t_1+2}\|u\|_{s+3})\\
&\cdot\Big(\sum_{T\in{\cal T}_h}\|\Psi\|_{4,T}^2+h_T^{2t_0-4}\delta_{t_1,0}\|\Psi\|_{t_0+1,T}^2+h_T^{2t_0-6}\delta_{t_1,1}\|\Psi\|_{t_0+1,T}^2\Big)^{\frac{1}{2}}\\
\lesssim&(h^{k+t_1}\|u\|_{k+1}+h^{s+t_1+2}\|u\|_{s+3})(1+h^{t_0-2}\delta_{t_1,0}+h^{t_0-3}\delta_{t_1,1})\|\Psi\|_{4}\\\lesssim&\begin{cases} (h^{k}\|u\|_{k+1}+h^{2}\|u\|_{3})(1+h^{t_0-2})\|\Psi\|_{4},& \mbox{if}~ s=0,\\
 ((h^{k+1}+h^{k+t_0-2})\|u\|_{k+1}+(h^{s+3}+h^{s+t_0})\|u\|_{s+3})\|\Psi\|_{4},&  \mbox{if}~ s>0, 
         \end{cases}
\end{split}
\end{equation*}
 where $\delta_{t_1,i}$ for $i=0,1$ is the usual Kronecker's delta with value 1 when $t_1=i$ for $i=0,1$ and value 0 otherwise.  This completes the proof of the lemma.
\end{proof}

\section{Error estimates in $H^2$}\label{error estimate}

The goal of this section is to establish some error estimates for the numerical approximation arising from the {\rm g}WG scheme \eqref{WG-scheme}.

\begin{theorem}\label{THM:energy-estimate}
Let $k\geq2$, $s=\min\{k,m,\ell,n\}$ and $q=\max\{k+1,s+3\}$. Let $u_h\in V_h$ be the numerical approximation arising from the {\rm gWG} scheme \eqref{WG-scheme}. Assume that the exact solution of the model equation \eqref{model-problem} satisfies $u\in H^{k+1}\cap H^4(\O)$ for $s=0$ and $u\in H^q(\O)$ for $s>0$. Then, the following error estimate holds true:
\begin{equation*}
\begin{split}
\3bare_h\3bar\lesssim
\left\{
\begin{array}{lr}
C_1(h,k,\gamma_1,\gamma_2)(\|u\|_{k+1}+\delta_{k,2}\|u\|_4),\quad if~s=0,\\
C_2(h,k,\gamma_1,\gamma_2,s)\|u\|_q,\;~\qquad\qquad\qquad if~s>0,
\end{array}
\right.
\end{split}
\end{equation*}
where $C_1(h,k,\gamma_1,\gamma_2)=h^{k-1}(h^{\frac{\gamma_1+3}{2}}+h^{\frac{\gamma_2+1}{2}})+h(h^{\frac{-\gamma_1-3}{2}}+h^{\frac{-\gamma_2-1}{2}}+1)$ and $C_2(h,k,\gamma_1,\gamma_2,s)=h^{k-1}(h^{\frac{\gamma_1+3}{2}}+h^{\frac{\gamma_2+1}{2}}+1)+h^{s+1}(h^{\frac{-\gamma_1-3}{2}}+h^{\frac{-\gamma_2-1}{2}}+1)$. 
\end{theorem}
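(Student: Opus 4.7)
The plan is to test the error equation \eqref{Error-equation} against $v = e_h \in V_h^0$ itself, which yields
\[
\3bar e_h\3bar^2 = \zeta_u(e_h).
\]
The task then reduces to bounding the six contributions to $\zeta_u(e_h)$ appearing in \eqref{error equation-remainder} by a multiple of $\3bar e_h\3bar$; dividing through delivers the desired estimate. Crucially, Lemma \ref{energy-estimate-pre} already supplies exactly such $\3bar v\3bar$-bounds for each of the six pieces, with tunable polynomial parameters $\theta \in [2,k]$ and $\nu \in [0,s]$.

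Next I would make the optimal choices $\theta = k$ and $\nu = s$ in \eqref{eq1}--\eqref{eq6}. A term-by-term inspection shows that the coefficients cluster into two natural families. The first family has order $h^{k-1}$ times powers of $h^{\gamma_1/2}, h^{\gamma_2/2}$ and comes from the projection error in $u$ captured by \eqref{eq1}, \eqref{eq2} and the $\|u\|_{k+1}$ part of \eqref{eq6}. The second family has order $h^{s+1}$ times powers of $h^{-\gamma_1/2}, h^{-\gamma_2/2}$ and comes from the projection error in $\pa^2 u$ captured by \eqref{eq3}, \eqref{eq5} and the $\|u\|_{s+3}$ part of \eqref{eq6}. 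In the regime $s > 0$, all terms are uniformly controlled in terms of $\|u\|_q$ with $q = \max\{k+1, s+3\}$, and the two families assemble cleanly into the advertised expression $C_2(h,k,\gamma_1,\gamma_2,s)\|u\|_q$.

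The one genuinely case-sensitive piece is \eqref{eq4} when $s = 0$. In that regime $\mathbb{Q}_s$ projects onto constants, so $\mathbb{Q}_s\pa^2_{ij}u - \pa^2_{ij}u$ cannot be handled as in the $s > 0$ case; the integration-by-parts split already performed in the proof of Lemma \ref{energy-estimate-pre} produces a bound requiring $\|u\|_4$ together with powers of $h$ that do not mesh with the $h^{s+1}$ pattern. Consequently the $s = 0$ total inherits both a $\|u\|_{k+1}$ dependence (from the first family and the other $s=0$ terms) and a separate $\|u\|_4$ dependence (from \eqref{eq4}). When $k \geq 3$ one has $k+1 \geq 4$ and $\|u\|_4$ is absorbed; when $k = 2$ we have $\|u\|_{k+1} = \|u\|_3$, which cannot control $\|u\|_4$, and thus the Kronecker factor $\delta_{k,2}$ appears. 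A final simplification uses $h^{k-1} \leq h$ for $k \geq 2$ to absorb the stray $h^{k-1}$ contribution into the $h \cdot 1$ piece of the second family, collapsing everything into $C_1(h,k,\gamma_1,\gamma_2)(\|u\|_{k+1} + \delta_{k,2}\|u\|_4)$.

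The main obstacle is not analytical but organizational: one must correctly assemble the six bounds of Lemma \ref{energy-estimate-pre} and verify that every stray power of $h^{\gamma_i/2}$ consolidates into the claimed $C_1$ or $C_2$. Because Lemma \ref{energy-estimate-pre} does the heavy lifting, the remainder is careful bookkeeping of exponents and a clean bifurcation on whether $s = 0$ or $s > 0$.
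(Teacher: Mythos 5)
Your proposal is correct and follows essentially the same route as the paper: set $v=e_h$ in the error equation to get $\3bar e_h\3bar^2=\zeta_u(e_h)$, then invoke Lemma \ref{energy-estimate-pre} with $\phi=u$, $\theta=k$, $\nu=s$ and assemble the exponents, splitting on $s=0$ versus $s>0$. Your accounting of where the $\|u\|_4$ term and the $\delta_{k,2}$ factor originate (the $s=0$ branch of \eqref{eq4}) and of the absorption $h^{k-1}\lesssim h$ matches the paper's bookkeeping exactly.
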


\begin{proof} 
By setting $v=e_h\in V_h^0$ in the error equation \eqref{Error-equation} and combining \eqref{triple-bar}, one arrives at

\begin{equation*}\label{energy-estimate-2}
\3bare_h\3bar^2=\zeta_u(e_h).
\end{equation*}
It follows from \eqref{error equation-remainder}, Lemma \ref{energy-estimate-pre} with $\phi=u$, $v=e_h$, $\theta=k$, $\nu=s$ and $k\geq2$ that
\begin{equation*}
\begin{split}
\3bare_h\3bar\lesssim&
\left\{
\begin{array}{lr}
(h^{\frac{2k+\gamma_1+1}{2}}+h^{\frac{2k+\gamma_2-1}{2}}+h^{k-1})\|u\|_{k+1}+(h+h^{\frac{-1-\gamma_1}{2}}+h^{\frac{1-\gamma_2}{2}})\|u\|_4,~~ if~s=0,\\
(h^{\frac{2k+\gamma_1+1}{2}}+h^{\frac{2k+\gamma_2-1}{2}}+h^{k-1}+h^{s+1}+h^{\frac{2s-\gamma_1-1}{2}}+h^{\frac{2s-\gamma_2+1}{2}})\|u\|_q,if~s>0,
\end{array}
\right.\\
\lesssim&
\left\{
\begin{array}{lr}
(h^{\frac{2k+\gamma_1+1}{2}}+h^{\frac{2k+\gamma_2-1}{2}}+h+h^{\frac{-1-\gamma_1}{2}}+h^{\frac{1-\gamma_2}{2}})\|u\|_4,
~if~s=0,~k=2,\\
(h^{\frac{2k+\gamma_1+1}{2}}+h^{\frac{2k+\gamma_2-1}{2}}+h+h^{\frac{-1-\gamma_1}{2}}+h^{\frac{1-\gamma_2}{2}})\|u\|_{k+1},
~if~s=0,~k>2,\\
(h^{\frac{2k+\gamma_1+1}{2}}+h^{\frac{2k+\gamma_2-1}{2}}+h^{k-1}+h^{s+1}+h^{\frac{2s-\gamma_1-1}{2}}+h^{\frac{2s-\gamma_2+1}{2}})\|u\|_q,if~s>0,
\end{array}
\right.\\
\lesssim&
\left\{
\begin{array}{lr}
C_1(h,k,\gamma_1,\gamma_2)(\|u\|_{k+1}+\delta_{k,2}\|u\|_4),\quad if~s=0,\\
C_2(h,k,\gamma_1,\gamma_2,s)\|u\|_q,\;~\qquad\qquad\qquad if~s>0,
\end{array}
\right.
\end{split}
\end{equation*}
which gives rise to the desired result.  This completes the proof of the theorem.
\end{proof}

  \begin{remark} For the {\rm gWG} element $V_{k,m,\ell}(T)$   with $m=k-2$, $\ell=k-2$ and $n\leq k-2$,  $\3bare_h\3bar$ converges at an optimal rate of ${\cal O}(h^{k-1})$ when $\gamma_1=-3$ and $\gamma_2=-1$. 
 \end{remark}

\section{Error estimates in $L^2$}\label{errorl2}

This section shall establish an error estimate for the numerical approximation in the usual $L^2$ norm by using the standard duality argument technique. To this end, we consider the following dual problem
\begin{equation}\label{dual-equation}
\begin{split}
\Delta^2\Psi&=e_0,\quad\mbox{in}~~\O,\\
        \Psi&=0,~\quad\mbox{on}~~\pa\O,\\
\frac{\pa\Psi}{\pa\textbf{n}}&=0,~\quad\mbox{on}~~\pa\O.
\end{split}
\end{equation}
Assume that the dual problem \eqref{dual-equation} satisfies the $H^4$ regularity property in the sense that there exists a generic constant $C$ such that
\begin{equation}\label{dual-regular}
\|\Psi\|_4\leq C\|e_0\|.
\end{equation}

\begin{theorem}\label{THM:L2-estimate}
 Let $k\geq2$, $s=\min\{k,m,\ell,n\}$, $q=\max\{k+1,s+3\}$ and $t_0=\min\{k,3\}$. Let $u_h\in V_h$ be the numerical approximation of the {\rm gWG} scheme \eqref{WG-scheme}. Assume that the exact solution of the biharmonic equation \eqref{model-problem} is sufficiently regular so that $u\in H^{k+1}(\O)\cap H^4(\O)$ for $s=0$ and $u\in H^q(\O)$ for $s>0$.  In addition, assume that the dual problem \eqref{dual-equation} has the $H^4$ regularity property \eqref{dual-regular}. Then, the following error estimate holds true; i.e.,
 
\begin{equation*}
\begin{split}
\|e_0\|\lesssim
\left\{
\begin{array}{lr}
C_3(h,k,\gamma_1,\gamma_2,t_0)(\|u\|_{k+1}+\delta_{k,2}\|u\|_4),\quad if~s=0,\\
C_4(h,k,\gamma_1,\gamma_2,s,t_0)\|u\|_q,\;~\qquad\qquad\qquad if~s>0,
\end{array}
\right.
\end{split}
\end{equation*}
where $C_3(h,k,\gamma_1,\gamma_2,t_0)=(h^{t_0-1}(h^{\frac{\gamma_1+3}{2}}+h^{\frac{\gamma_2+1}{2}})
+h(1+h^{\frac{-\gamma_1-3}{2}}+h^{\frac{-\gamma_2-1}{2}}))C_1(h,k,\gamma_1,\gamma_2)$ and $C_4(h,k,\gamma_1,\gamma_2,s,t_0)=h^{t_0-1}(h^{\frac{\gamma_1+3}{2}}+h^{\frac{\gamma_2+1}{2}}+1+h^{\frac{-\gamma_1-3}{2}}+h^{\frac{-\gamma_2-1}{2}})
C_2(h,k,\gamma_1,\gamma_2,s)$. 
\end{theorem}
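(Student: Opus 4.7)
The plan is to run a duality argument against the dual problem \eqref{dual-equation} together with its $H^4$-regularity \eqref{dual-regular}. Observe first that the choice of boundary data for $u_h$ in \eqref{WG-scheme} forces $e_b = 0$ and $\pmb{e_g} = \pmb{0}$ on $\pa\O$, so that $e_h \in V_h^0$; likewise, the homogeneous boundary conditions on $\Psi$ give $Q_h\Psi \in V_h^0$. Both facts will let me apply the earlier identities freely.

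I would apply the identity \eqref{error-equation-2-1} with $u$ replaced by $\Psi$ (so $f = \Delta^2\Psi = e_0$) and with $v = e_h$. Recognizing that the five residual terms on the right-hand side of \eqref{error-equation-2-1} are, up to sign, the five non-stabilizer summands of $\zeta_\Psi(e_h)$ (cf.\ \eqref{error equation-remainder} with $u \leftrightarrow \Psi$, $v \leftrightarrow e_h$), I obtain
\begin{equation*}
\|e_0\|^2 = s(Q_h\Psi, e_h) - \zeta_\Psi(e_h) + (\pa^2_g Q_h\Psi, \pa^2_g e_h)_{{\cal T}_h}.
\end{equation*}
The last term is rewritten via the error equation \eqref{Error-equation} with $v = Q_h\Psi$ as $\zeta_u(Q_h\Psi) - s(e_h, Q_h\Psi)$. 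The two stabilizer contributions cancel by symmetry of $s(\cdot,\cdot)$, yielding the clean identity
\begin{equation*}
\|e_0\|^2 = \zeta_u(Q_h\Psi) - \zeta_\Psi(e_h).
\end{equation*}

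Next I would bound each of the six constituents of $\zeta_u(Q_h\Psi)$ by the corresponding estimate \eqref{q1}-\eqref{q6} of Lemma \ref{L2-estimate-pre}, producing the $\|u\|_{k+1}\|\Psi\|_4$ and $\|u\|_{s+3}\|\Psi\|_4$ contributions appearing in $C_3$ and $C_4$. For $\zeta_\Psi(e_h)$ I would invoke Lemma \ref{energy-estimate-pre} with $\phi = \Psi$, $v = e_h$, $\theta = t_0 = \min\{k,3\}$, and $\nu = \min\{s,1\}$ (these being the maximal indices consistent with $\Psi \in H^4$), giving $|\zeta_\Psi(e_h)| \lesssim (\text{prefactor in } h, \gamma_1, \gamma_2)\|\Psi\|_4 \3bar e_h \3bar$. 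Then Theorem \ref{THM:energy-estimate} controls $\3bar e_h \3bar$ by $C_1(\|u\|_{k+1}+\delta_{k,2}\|u\|_4)$ in the case $s=0$ or by $C_2\|u\|_q$ in the case $s>0$, and the regularity bound \eqref{dual-regular} converts every $\|\Psi\|_4$ factor into $C\|e_0\|$; dividing by $\|e_0\|$ delivers the desired estimate.

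The main technical hurdle is the exponent bookkeeping. The factor $h^{t_0-1}$ present in $C_3$ and $C_4$ arises because $\Psi$ enjoys only $H^4$-regularity, which caps $\theta$ at $t_0$ and thereby loses one order relative to the primal interpolation. The symmetric terms $h^{\pm(\gamma_i+\cdots)/2}$ in $C_3, C_4$ emerge from trace-inequality powers paired with stabilizer weights $h_T^{\gamma_1}, h_T^{\gamma_2}$, once combining the contributions from $\zeta_u(Q_h\Psi)$ and from the product $(\text{prefactor})\cdot\3bar e_h\3bar$ arising in $\zeta_\Psi(e_h)$. Finally, the case split $s=0$ versus $s>0$ must be propagated throughout, appearing in \eqref{q4} and \eqref{q6} of Lemma \ref{L2-estimate-pre}, in the analogous bound for $\zeta_\Psi(e_h)$ via Lemma \ref{energy-estimate-pre}, and through the $\delta_{k,2}\|u\|_4$ correction inherited from Theorem \ref{THM:energy-estimate}; collecting all contributions separately in each case produces the two distinct closed forms $C_3$ and $C_4$.
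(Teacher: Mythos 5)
Your proposal is correct and follows essentially the same route as the paper: the same duality identity $\|e_0\|^2=\zeta_u(Q_h\Psi)-\zeta_\Psi(e_h)$ (obtained from \eqref{error-equation-2-1} applied to $\Psi$ plus the error equation with $v=Q_h\Psi$ and the symmetry of $s(\cdot,\cdot)$), followed by Lemma \ref{L2-estimate-pre} for $\zeta_u(Q_h\Psi)$ and Lemma \ref{energy-estimate-pre} with $\theta=t_0$ combined with Theorem \ref{THM:energy-estimate} and \eqref{dual-regular} for $\zeta_\Psi(e_h)$. The only deviation is your choice $\nu=\min\{s,1\}$ in the $s>0$ case where the paper takes $\nu=t_0-2$; both are admissible and your resulting prefactor is dominated by $C_4$, so the stated bound follows either way.
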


\begin{proof}
Testing the dual problem \eqref{dual-equation} against $e_0$ and using the usual integration by parts, we have
\begin{equation}\label{THM:L2-estimate-1}
\begin{split}
\|e_0\|^2
=&(\Delta^2\Psi,e_0)\\
=&\sum_{T\in{\cal T}_h}\sum_{i,j=1}^d(\pa^2_{ij}\Psi,\pa^2_{ij}e_0)_T+\langle\pa_j(\pa^2_{ij}\Psi)\cdot n_i,e_0\rangle_{\pa T}
   -\langle\pa^2_{ij}\Psi,\pa_ie_0\cdot n_j\rangle_{\pa T}\\
=&\sum_{T\in{\cal T}_h}\sum_{i,j=1}^d((I-\mathbb{Q}_s)\pa^2_{ij}\Psi,\pa^2_{ij}e_0)_T+(\mathbb{Q}_s\pa^2_{ij}\Psi,\pa^2_{ij}e_0)_T
   \\
 &+\langle\pa_j(\pa^2_{ij}\Psi)\cdot n_i,e_0-e_b\rangle_{\pa T} -\langle\pa^2_{ij}\Psi,(\pa_ie_0-e_{gi})\cdot n_j\rangle_{\pa T},
\end{split}
\end{equation}
where we used  $\sum_{T\in{\cal T}_h}\sum_{i,j=1}^d\langle\pa_j(\pa^2_{ij}\Psi)\cdot n_i,e_b\rangle_{\pa T}=0$ and $\sum_{T\in{\cal T}_h}\sum_{i,j=1}^d\langle\pa^2_{ij}\Psi,e_{gi}\cdot n_j\rangle_{\pa T}=0$, since $e_b=0$ and $\pmb{e_{g}} =0$ on $\partial\Omega$.

Letting $u=\Psi$ and $v=e_h$ in \eqref{error-equation-1-2} and letting $v=Q_h\Psi$ in the error equation \eqref{Error-equation}  to obtain
\begin{equation*}
\begin{split}
 &\sum_{T\in{\cal T}_h}\sum_{i,j=1}^d(\mathbb{Q}_s\pa^2_{ij}\Psi,\pa^2_{ij}e_0)_T\\
=&\sum_{T\in{\cal T}_h}\sum_{i,j=1}^d(\pa^2_{ij,g}Q_h\Psi,\mathbb{Q}_s\pa^2_{ij,g}e_h)_T-(Q_0\Psi-\Psi,\pa^2_{ji}(\mathbb{Q}_s\pa^2_{ij,g}e_h))_T\\
&-\langle(e_0-e_b)n_i,\pa_j(\mathbb{Q}_s\pa^2_{ij}\Psi)\rangle_{\pa T}
+\langle \pa_ie_0-e_{gi},\mathbb{Q}_s\pa^2_{ij}\Psi\cdot n_j\rangle_{\pa T}\\
=&\sum_{T\in{\cal T}_h}\sum_{i,j=1}^d(\pa^2_{ij,g}Q_h\Psi,(\mathbb{Q}_s-I)\pa^2_{ij,g}e_h)_T+\zeta_u(Q_h\Psi)-s(e_h,Q_h\Psi)
-(Q_0\Psi-\Psi,\pa^2_{ji}(\mathbb{Q}_s\pa^2_{ij,g}e_h))_T\\
&-\langle(e_0-e_b)n_i,\pa_j(\mathbb{Q}_s\pa^2_{ij}\Psi)\rangle_{\pa T}
+\langle \pa_ie_0-e_{gi},\mathbb{Q}_s\pa^2_{ij}\Psi\cdot n_j\rangle_{\pa T}.
\end{split}
\end{equation*}
Substituting the above equation into \eqref{THM:L2-estimate-1} gives rise to
\begin{equation}\label{error-equation-2}
\|e_0\|^2=\zeta_u(Q_h\Psi)-\zeta_\Psi(e_h).
\end{equation}
 
Next, it suffices to estimate the two terms on the right hand of \eqref{error-equation-2}. Let us first estimate the first term $\zeta_u(Q_h\Psi)$. For the case of $s=0$, it follows from \eqref{error equation-remainder}, Lemma \ref{L2-estimate-pre},  and \eqref{dual-regular} that
\begin{equation}\label{error-equation-3310}
\begin{split}
&|\zeta_u(Q_h\Psi)|\\ \lesssim&
((h^{k+t_0+\gamma_1+1}+h^{k+t_0+\gamma_2-1}+h^{k+1}+h^{k+t_0-2}+h^k)\|u\|_{k+1}\\&+h^{t_0}\|u\|_4+h^2\|u\|_3)\|\Psi\|_4,\\
\lesssim&((h^{k+t_0+\gamma_1+1}+h^{k+t_0+\gamma_2-1}+h^k)\|u\|_{k+1}+h^{t_0}\|u\|_4+h^2\|u\|_4)\|\Psi\|_4,\\
\lesssim&((h^{k+t_0+\gamma_1+1}+h^{k+t_0+\gamma_2-1}+h^k)\|u\|_{k+1}+h^2\|u\|_4)\|\Psi\|_4,\\
\lesssim&(h^{k+t_0+\gamma_1+1}+h^{k+t_0+\gamma_2-1}+h^2)(\|u\|_{k+1}+\delta_{k,2}\|u\|_4)\|e_0\|.
\end{split}
\end{equation}
For the case of $s>0$, using \eqref{error equation-remainder}, Lemma \ref{L2-estimate-pre},  and \eqref{dual-regular}  gives
\begin{equation}\label{error-equation-3311}
\begin{split}
&|\zeta_u(Q_h\Psi)|\\ \lesssim&
((h^{k+t_0+\gamma_1+1}+h^{k+t_0+\gamma_2-1}+h^{k+1}+h^{k+t_0-2})\|u\|_{k+1}\\
&+(h^{s+t_0}+h^{s+3})\|u\|_{s+3})\|\Psi\|_4,\\
\lesssim&
((h^{k+t_0+\gamma_1+1}+h^{k+t_0+\gamma_2-1}+h^{k+t_0-2})\|u\|_{k+1}+h^{s+t_0}\|u\|_{s+3})\|\Psi\|_4,\\
\lesssim&
(h^{k+t_0+\gamma_1+1}+h^{k+t_0+\gamma_2-1}+h^{k+t_0-2}+h^{s+t_0})\|u\|_q\|\Psi\|_4,\\
\lesssim&
(h^{k+t_0+\gamma_1+1}+h^{k+t_0+\gamma_2-1}+h^{k+t_0-2}+h^{s+t_0})\|u\|_q\|e_0\|.
\end{split}
\end{equation}

As to the second term $\zeta_\Psi(e_h)$, for the case of $s=0$, using \eqref{error equation-remainder}, Lemma \ref{energy-estimate-pre} with $\phi=\Psi$, $v=e_h$, $\theta=t_0$, $\nu=0$, \eqref{dual-regular} and Theorem \ref{THM:energy-estimate} gives
\begin{equation}\label{error-equation-3312}
\begin{split}
|\zeta_\Psi(e_h)|
\lesssim&((h^{\frac{2t_0+\gamma_1+1}{2}}+h^{\frac{2t_0+\gamma_2-1}{2}}+h^{t_0-1})
  \|\Psi\|_{t_0+1}\\
&+(h+h^{\frac{-1-\gamma_1}{2}}+h^{\frac{1-\gamma_2}{2}})\|\Psi\|_{4})\3bare_h\3bar\\
\lesssim&(h^{\frac{2t_0+\gamma_1+1}{2}}+h^{\frac{2t_0+\gamma_2-1}{2}}+h^{t_0-1}+h
+h^{\frac{-1-\gamma_1}{2}}+h^{\frac{1-\gamma_2}{2}})\|\Psi\|_{4}\3bare_h\3bar\\
\lesssim&(h^{\frac{2t_0+\gamma_1+1}{2}}+h^{\frac{2t_0+\gamma_2-1}{2}}
+h+h^{\frac{-1-\gamma_1}{2}}+h^{\frac{1-\gamma_2}{2}})\|\Psi\|_{4}\3bare_h\3bar\\
\lesssim&(h^{t_0-1}(h^{\frac{\gamma_1+3}{2}}+h^{\frac{\gamma_2+1}{2}})
+h(1+h^{\frac{-\gamma_1-3}{2}}+h^{\frac{-\gamma_2-1}{2}}))\|e_0\|\3bare_h\3bar,\\
\lesssim&(h^{t_0-1}(h^{\frac{\gamma_1+3}{2}}+h^{\frac{\gamma_2+1}{2}})
+h(1+h^{\frac{-\gamma_1-3}{2}}+h^{\frac{-\gamma_2-1}{2}}))C_1(h,k,\gamma_1,\gamma_2)\\
&\cdot(\|u\|_{k+1}+\delta_{k,2}\|u\|_4)\|e_0\|\\
\lesssim&C_3(h,k,\gamma_1,\gamma_2,t_0)(\|u\|_{k+1}+\delta_{k,2}\|u\|_4)\|e_0\|.
\end{split}
\end{equation}
For the case of $s>0$, we apply \eqref{error equation-remainder}, Lemma \ref{energy-estimate-pre} with $\phi=\Psi$, $v=e_h$, $\theta=t_0$, $\nu=t_0-2$,  \eqref{dual-regular} and Theorem \ref{THM:energy-estimate} to obtain
\begin{equation}\label{error-equation-3313}
\begin{split}
&|\zeta_\Psi(e_h)|\\
\lesssim&(h^{\frac{2t_0+\gamma_1+1}{2}}+h^{\frac{2t_0+\gamma_2-1}{2}}+h^{t_0-1}+h^{\frac{2t_0-5-\gamma_1}{2}}+h^{\frac{2t_0-3-\gamma_2}{2}})\|\Psi\|_{t_0+1}\3bare_h\3bar\\
\lesssim&h^{t_0-1}(h^{\frac{\gamma_1+3}{2}}+h^{\frac{\gamma_2+1}{2}}+1+h^{\frac{-\gamma_1-3}{2}}+h^{\frac{-\gamma_2-1}{2}})\|\Psi\|_{4}\3bare_h\3bar\\
\lesssim&h^{t_0-1}(h^{\frac{\gamma_1+3}{2}}+h^{\frac{\gamma_2+1}{2}}+1+h^{\frac{-\gamma_1-3}{2}}+h^{\frac{-\gamma_2-1}{2}})\|e_0\|\3bare_h\3bar\\
\lesssim&h^{t_0-1}(h^{\frac{\gamma_1+3}{2}}+h^{\frac{\gamma_2+1}{2}}+1+h^{\frac{-\gamma_1-3}{2}}+h^{\frac{-\gamma_2-1}{2}})
C_2(h,k,\gamma_1,\gamma_2,s)\|u\|_q\|e_0\|\\
\lesssim&C_4(h,k,\gamma_1,\gamma_2,s,t_0)\|u\|_q\|e_0\|.
\end{split}
\end{equation}

Finally, substituting  \eqref{error-equation-3310}- \eqref{error-equation-3313} into \eqref{error-equation-2} gives rise to the desired result. This completes the proof of the theorem. 
\end{proof}

  \begin{remark}
   For the {\rm gWG} element $V_{k,m,\ell}(T)$  with $m=k-2$, $\ell=k-2$ and $n\leq k-2$,  $\|e_0\|$ has a sub-optimal order of convergence for $k=2$ and an optimal order of convergence for $k\geq3$ when $\gamma_1=-3$ and $\gamma_2=-1$.
  \end{remark}

To establish the error estimates for $e_b$ and $\pmb{e_g}$ in the $L^2$ norm, we introduce
$$
\|e_b\|_{\E_h}=\Big(\sum_{T\in{\cal T}_h}h_T\|e_b\|_{\pa T}^2\Big)^{\frac{1}{2}},~~~~~\|\pmb{e_g}\|_{\E_h}=\Big(\sum_{T\in{\cal T}_h}h_T\|\pmb{e_g}\|_{\pa T}^2\Big)^{\frac{1}{2}}.
$$

\begin{theorem}\label{THM:L2-estimate-edge}
In the assumptions of Theorem \ref{THM:L2-estimate}, the following error estimates hold true:
\begin{equation*}
\begin{split}
&\|e_b\|_{\E_h}\lesssim\|e_0\|+h^\frac{1-\gamma_1}{2}\3bare_h\3bar,\\
&\|\pmb{e_g}\|_{\E_h}\lesssim h^{-1}\|e_0\|+h^\frac{1-\gamma_2}{2} \3bare_h\3bar, 
\end{split}
\end{equation*}
where the estimates for $\3bare_h\3bar$ and $\|e_0\|$ are given by Theorem \ref{THM:energy-estimate} and Theorem \ref{THM:L2-estimate}, respectively. 
\end{theorem}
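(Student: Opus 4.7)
The plan is to bound the boundary and gradient-trace error components by splitting each through the projection operators that appear in the stabilizer, so that one piece is controlled by $\3bare_h\3bar$ and the other piece is controlled by $\|e_0\|$ via an inverse trace inequality on polynomials.

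First, I would unpack the error components. Since $e_h = Q_h u - u_h = \{Q_0u - u_0,\ Q_bu - u_b,\ \pmb{Q_g}(\nabla u) - \pmb{u_g}\} = \{e_0, e_b, \pmb{e_g}\}$, the definition of $s(\cdot,\cdot)$ and \eqref{triple-bar} give directly
\begin{equation*}
\sum_{T\in{\cal T}_h} h_T^{\gamma_1}\|Q_b e_0 - e_b\|_{\pa T}^2 \;\lesssim\; \3bare_h\3bar^2, \qquad \sum_{T\in{\cal T}_h} h_T^{\gamma_2}\|\pmb{Q_g}(\nabla e_0) - \pmb{e_g}\|_{\pa T}^2 \;\lesssim\; \3bare_h\3bar^2.
\end{equation*}
This is the key identification, and the only subtlety is to notice that $e_0$ is a polynomial of degree at most $k$ on each $T$, which is what lets the inverse trace inequality \eqref{inverse inequality} be applied to $e_0$ and to $\nabla e_0$.

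Next, for $\|e_b\|_{\E_h}$, I would use a triangle-inequality split
\begin{equation*}
\|e_b\|_{\pa T} \;\leq\; \|Q_b e_0 - e_b\|_{\pa T} + \|Q_b e_0\|_{\pa T} \;\leq\; \|Q_b e_0 - e_b\|_{\pa T} + \|e_0\|_{\pa T},
\end{equation*}
using that $Q_b$ is a contraction in $L^2(\pa T)$. Applying \eqref{inverse inequality} to the polynomial $e_0$ gives $\|e_0\|_{\pa T}\lesssim h_T^{-1/2}\|e_0\|_T$, so multiplying by $h_T$ and summing yields
\begin{equation*}
\|e_b\|_{\E_h}^2 \;\lesssim\; \sum_{T\in{\cal T}_h} h_T\|Q_b e_0-e_b\|_{\pa T}^2 + \|e_0\|^2 \;\lesssim\; h^{1-\gamma_1}\3bare_h\3bar^2 + \|e_0\|^2,
\end{equation*}
which gives the first claimed bound after taking square roots.

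For $\|\pmb{e_g}\|_{\E_h}$, the argument is parallel: split $\|\pmb{e_g}\|_{\pa T} \leq \|\pmb{Q_g}(\nabla e_0)-\pmb{e_g}\|_{\pa T} + \|\nabla e_0\|_{\pa T}$, then apply \eqref{inverse inequality} followed by the inverse inequality on $e_0$ to obtain $\|\nabla e_0\|_{\pa T}\lesssim h_T^{-3/2}\|e_0\|_T$. Multiplying by $h_T$, summing over $T$, and absorbing the stabilizer term through the factor $h_T^{\gamma_2}$ delivers
\begin{equation*}
\|\pmb{e_g}\|_{\E_h}^2 \;\lesssim\; h^{1-\gamma_2}\3bare_h\3bar^2 + h^{-2}\|e_0\|^2,
\end{equation*}
which is the second estimate. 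There is no real obstacle here: the proof is essentially a bookkeeping exercise combining the triangle inequality, the $L^2$-contraction property of $Q_b$ and $\pmb{Q_g}$, the polynomial inverse/trace inequalities, and the definition of the triple-bar norm. The only point that needs care is the extra factor $h^{-1}$ in the $\pmb{e_g}$ bound, which comes from the inverse inequality used to convert $\|\nabla e_0\|_T$ into $\|e_0\|_T$.
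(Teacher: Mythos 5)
Your proposal is correct and follows essentially the same route as the paper: split $e_b$ (resp. $\pmb{e_g}$) through $Q_b e_0$ (resp. $\pmb{Q_g}(\nabla e_0)$), absorb one piece into the stabilizer at the cost of $h^{(1-\gamma_i)/2}$, and control the other by the polynomial trace inequality \eqref{inverse inequality} (plus one inverse inequality to convert $\|\nabla e_0\|_T$ into $h_T^{-1}\|e_0\|_T$ for the gradient component). The paper only writes out the $e_b$ case and asserts the $\pmb{e_g}$ case is analogous; you have supplied exactly the intended argument, including the source of the extra $h^{-1}$.
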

\begin{proof}
From the triangle inequality and the trace inequality \eqref{inverse inequality}, one arrives at
\begin{equation*}\label{L2-estimate-edge-2}
\begin{split}
 \|e_b\|_{\E_h}
=&\Big(\sum_{T\in{\cal T}_h}h_T\|e_b\|_{\pa T}^2\Big)^{\frac{1}{2}}\\
\lesssim&\Big(\sum_{T\in{\cal T}_h}h_T\|Q_be_0\|_{\pa T}^2+h_T\|e_b-Q_be_0\|_{\pa T}^2\Big)^{\frac{1}{2}}\\
\lesssim&\Big(\sum_{T\in{\cal T}_h}h_Th_T^{-1}\|e_0\|_{T}^2\Big)^{\frac{1}{2}}
    +\Big(\sum_{T\in{\cal T}_h}h_{T}^{1-\gamma_1}\rho_1h_{T}^{\gamma_1}\|e_b-Q_be_0\|_{\pa T}^2\Big)^{\frac{1}{2}}\\
\lesssim&\|e_0\|+h^\frac{1-\gamma_1}{2}\3bare_h\3bar.
\end{split}
\end{equation*}
Similar argument can be straightforward applied to establish the error estimate for $\pmb{e_g}$. This completes the proof of the theorem.
\end{proof}

\section{Numerical experiments}\label{Section:NE}

In this section,  some numerical results are demonstrated to validate the convergence theory established in the previous sections.

For the convenience of notation, the {\rm g}WG element and the generalized discrete weak second order partial derivative are denoted by $P_k(T)|P_m(\pa T)|P_\ell(\pa T)]^d||P_n(T)$ element. The numerical experiments will be conducted on the convex domain $\O_1=(0,1)^2$ and non-convex  L-shaped domain  $\O_2$ with vertices $A_1=(-1,1)$, $A_2=(1,1)$, $A_3=(1,0)$, $A_4=(0,0)$, $A_5=(0,-1)$ and $A_6=(-1,-1)$. The uniform triangular, uniform square, and uniform rectangular partitions are employed. An uniform triangular partition starts with an initial uniform triangulation, and the next level partition is obtained by uniformly partitioning each coarse triangular element into four sub-triangles by connecting the middle points of three edges of each triangular element. To obtain an uniform rectangular partition, we start from an initial $3\times2$ rectangular partition, and the next level partition is obtained by uniformly dividing each rectangular element into four sub-rectangles by connecting the middle points on two parallel edges of each rectangular element. An uniform square partition can be obtained similarly.

The following metrics are employed to measure the errors:
$$
\displaystyle
\begin{array}{lll}
&\mbox{Discrete $H^2$-norm:}&\ \displaystyle|e_0|_{2}=\Big(\sum_{T\in{\cal T}_h}\sum_{i,j=1}^d|\pa_{ij}^2u_0(M_c)-\pa_{ij}^2u(M_c)|^2
                                                                    \times|T|\Big)^{1/2},\\
&\mbox{Discrete $H^{1}$-norm:}&\ \displaystyle|e_0|_{1}=\Big(\sum_{T\in{\cal T}_h}|\nabla u_0(M_c)-\nabla u(M_c)|^2\times|T|\Big)^{1/2},
\end{array}
$$
where $M_c$ represents the center point on each element $T$ and $|T|$ is the area of $T$.

 In all tables, ``Conv''   means the theoretical rate of convergence established in this paper. ``N/A'' means the theoretical rate of convergence has not been developed in this paper.
\subsection{Smooth exact solutions}

\subsubsection{The {\rm gWG} element  with \text{$n>k-2$}}

Table \ref{NE:squa:Case1-2-0-0-1-2} illustrates the numerical performance for the {\rm g}WG scheme  \eqref{WG-scheme} with the  $P_2(T)|P_0(\pa T)|[P_0(\pa T)]^2||P_1(T)$ element on an uniform rectangular partition of the domain $\O_1$. The stabilization parameters are given by $\rho_1=1$, $\rho_2=1$, $\gamma_1=-3$ and $\gamma_2=-1$. The right-hand side function and the boundary conditions are chosen to match the exact solution $u=\cos(x+1)\sin(2y-1)$.  These numerical results are  consistent with the convergence rates for $\3bare_h\3bar$ and $\|e_b\|_{\E_h}$.  $\|\pmb{e_g}\|_{\E_h}$ and  $\|e_{0}\|$ converge at the rates higher than the theoretical rates ${\cal O}(h)$ and ${\cal O}(h^2)$ respectively.  In addition, the convergence rates for  $|e_0|_{2}$ and $|e_0|_{1}$ are demonstrated, for which no theory has been developed in this paper.

\begin{table}[htbp]\centering\scriptsize
\tiny
\begin{center}
\caption{Numerical errors and convergence rates for the $P_2(T)|P_0(\pa T)|[P_0(\pa T)]^2||P_1(T)$ element.}\label{NE:squa:Case1-2-0-0-1-2}
\begin{tabular}{p{0.3cm}p{0.9cm}p{0.2cm}p{0.9cm}p{0.2cm}p{0.9cm}p{0.2cm}p{0.9cm}p{0.5cm}p{0.9cm}p{0.6cm}p{0.9cm}p{0.3cm}}
\hline
  $1/h$&$\3bare_h\3bar$&Rate&$\|e_{0}\|$&Rate&$\|e_b\|_{\E_h}$&Rate&$\|\pmb{e_g}\|_{\E_h}$&Rate&$|e_0|_{2}$&Rate&$|e_0|_{1}$&Rate\\
\hline
                8   &8.73e-02 &0.96    &9.34e-05 &3.63    &5.11e-05 &1.81    &1.92e-03 &1.92    &1.32e-02 &1.43    &2.66e-03 &1.93\\
                16  &4.44e-02 &0.98    &9.00e-06 &3.38    &6.24e-06 &3.04    &4.79e-04 &2.00    &4.67e-03 &1.50    &6.68e-04 &1.99\\
                32  &2.24e-02 &0.99    &1.02e-06 &3.14    &7.09e-07 &3.14    &1.20e-04 &2.00    &1.64e-04 &1.51    &1.67e-04 &2.00\\
                64  &1.13e-02 &0.99    &1.33e-07 &2.94    &1.26e-07 &2.49    &2.99e-05 &2.00    &5.75e-04 &1.51    &4.17e-05 &2.00\\
                128 &5.65e-03 &1.00    &2.07e-08 &2.69    &2.93e-08 &2.11    &7.47e-06 &2.00    &2.03e-04 &1.50    &1.04e-05 &2.00\\
                \text{Conv} &  &1.0     &         &2.0     &         &2.0     &         &1.0     &         &N/A      &         &N/A\\
\hline
\end{tabular}
\end{center}
\end{table}

\subsubsection{The {\rm gWG} element  with \text{$n=k-2$}}

Table \ref{NE:tri:Case1-2-0-1-2} demonstrates the numerical performance of the $P_2(T)|P_m(\pa T)|[P_0(\pa T)]^2||P_0(T)$ elements for $m=0,1,2$ on the uniform triangular partition of $\O_1$. The stabilization parameters are $\rho_1=1$, $\rho_2=1$,  $\gamma_1=-3$ and $\gamma_2=-1$. The exact solution is $u=\sin(x)\sin(y)$. For different values of $m$, we observe the convergence rates for $\3bare_h\3bar$,  $\|e_0\|$ and $\|e_b\|_{\E_h}$ are consistent with the theory, and the convergence rate for $\|\pmb{e_g}\|_{\E_h}$ seems to be higher than the theoretical rate ${\cal O}(h)$.

\begin{table}[htbp]\centering\scriptsize
\tiny
\begin{center}
\caption{Numerical errors and convergence rates for the $P_2(T)|P_m(\pa T)|[P_0(\pa T)]^2||P_0(T)$ element.}\label{NE:tri:Case1-2-0-1-2}
\begin{tabular}{p{0.3cm}p{0.9cm}p{0.2cm}p{0.9cm}p{0.2cm}p{0.9cm}p{0.2cm}p{0.9cm}p{0.2cm}p{1.15cm}p{0.2cm}p{1.15cm}p{0.3cm}}
\hline
  $1/h$&$\3bare_h\3bar$&Rate&$\|e_0\|$&Rate&$\|e_b\|_{\E_h}$&Rate&$\|\pmb{e_g}\|_{\E_h}$&Rate&$|e_0|_{2}$&Rate&$|e_0|_{1}$&Rate\\
\hline
&\text{$m=0$}&&&&&&&&&&&\\
\hline
                8   &1.59e-01 &0.46    &7.00e-03 &1.75    &4.62e-03 &1.72    &4.25e-02 &0.72    &8.84e-02 &0.55    &2.03e-02 &0.74\\
                16  &1.03e-01 &0.63    &2.32e-03 &1.59    &1.60e-03 &1.53    &1.80e-02 &1.24    &4.67e-02 &0.92    &8.59e-03 &1.24\\
                32  &5.92e-02 &0.80    &7.11e-04 &1.71    &5.00e-04 &1.68    &5.98e-03 &1.59    &2.13e-02 &1.14    &2.87e-03 &1.58\\
                64  &3.17e-02 &0.90    &1.95e-04 &1.87    &1.38e-04 &1.86    &1.73e-03 &1.79    &9.25e-03 &1.20    &8.28e-04 &1.79\\
                128 &1.64e-02 &0.95    &5.07e-05 &1.94    &3.58e-05 &1.94    &4.61e-04 &1.91    &4.16e-03 &1.15    &2.21e-04 &1.90\\
                \text{Conv} &  &1.0     &         &2.0     &         &2.0     &         &1.0     &         &N/A      &         &N/A\\
\hline
&\text{$m=1$}&&&&&&&&&&&\\
\hline
                8   &1.43e-01 &0.52    &4.78e-03 &1.66    &5.58e-03 &1.64    &3.29e-02 &0.93    &1.01e-01 &0.48    &1.53e-02 &0.95\\
                16  &8.77e-02 &0.71    &1.55e-03 &1.62    &1.80e-03 &1.64    &1.25e-02 &1.40    &6.10e-02 &0.72    &5.82e-03 &1.39\\
                32  &4.87e-02 &0.85    &4.50e-04 &1.78    &5.20e-04 &1.79    &3.89e-03 &1.69    &3.33e-02 &0.87    &1.82e-03 &1.68\\
                64  &2.56e-02 &0.93    &1.20e-04 &1.91    &1.38e-04 &1.91    &1.08e-03 &1.85    &1.73e-02 &0.94    &5.04e-04 &1.85\\
                128 &1.31e-02 &0.97    &3.08e-05 &1.96    &3.55e-05 &1.96    &2.82e-04 &1.93    &8.82e-03 &0.97    &1.32e-04 &1.93\\
                \text{Conv} &  & 1.0    &         &2.0     &         &2.0     &         &1.0     &         &N/A      &         &N/A\\
\hline
&\text{$m=2$}&&&&&&&&&&&\\
\hline
                8   &1.26e-01 &0.59    &3.35e-03 &1.67    &4.68e-03 &1.63    &2.53e-02 &1.10    &6.84e-02 &0.69    &1.13e-02 &1.12\\
                16  &7.39e-02 &0.77    &1.03e-03 &1.70    &1.45e-03 &1.69    &8.87e-03 &1.51    &3.59e-02 &0.93    &3.97e-03 &1.51\\
                32  &4.01e-02 &0.88    &2.85e-04 &1.85    &4.03e-04 &1.85    &2.62e-03 &1.76    &1.76e-02 &1.03    &1.18e-03 &1.75\\
                64  &2.08e-02 &0.95    &7.43e-05 &1.94    &1.05e-04 &1.94    &7.07e-04 &1.89    &8.60e-03 &1.03    &3.18e-04 &1.89\\
                128 &1.06e-02 &0.98    &1.89e-05 &1.97    &2.68e-05 &1.97    &1.83e-04 &1.95    &4.24e-03 &1.02    &8.24e-05 &1.95\\
                \text{Conv} &  & 1.0    &         &2.0     &         &2.0     &         &1.0     &         &N/A      &         &N/A\\
\hline
\end{tabular}
\end{center}
\end{table}

Table \ref{NE:L-shaped:Case3-3-0-1-1} presents the numerical result  for the $P_3(T)|P_0(\pa T)|[P_1(\pa T)]^2||P_1(T)$ element on the uniform triangular partition of $\O_1$.  We take  $\gamma_1=-3$, $\gamma_2=-1$, and $\rho_i=1$ for $i=1,2$. The exact solution is $u=\cos(x+1)\sin(2y-1)$. The numerical performance is similar to that of the numerical test shown in Table \ref{NE:tri:Case1-2-0-1-2}.

\begin{table}[htbp]\centering\scriptsize
\tiny
\begin{center}
\caption{Numerical errors and convergence rates for the $P_3(T)|P_0(\pa T)|[P_1(\pa T)]^2||P_1(T)$ element.}\label{NE:L-shaped:Case3-3-0-1-1}
\begin{tabular}{p{0.3cm}p{0.9cm}p{0.2cm}p{0.9cm}p{0.2cm}p{0.9cm}p{0.2cm}p{0.9cm}p{0.2cm}p{1.15cm}p{0.2cm}p{1.15cm}p{0.3cm}}
\hline
  $1/h$&$\3bare_h\3bar$&Rate&$\|e_0\|$&Rate&$\|e_b\|_{\E_h}$&Rate&$\|\pmb{e_g}\|_{\E_h}$&Rate&$|e_0|_{2}$&Rate&$|e_0|_{1}$&Rate\\
\hline
                4  &6.33e-01 &0.94    &1.55e-02 &3.77    &4.64e-04 &2.61    &6.12e-02 &1.80    &8.31e-02 &1.54    &8.08e-03 &2.31\\
                8  &3.19e-01 &0.99    &1.12e-03 &3.79    &1.04e-04 &2.16    &1.55e-02 &1.98    &2.71e-02 &1.62    &1.52e-03 &2.41\\
                16 &1.60e-01 &0.99    &1.03e-04 &3.45    &2.60e-05 &2.00    &3.86e-03 &2.01    &8.84e-03 &1.61    &2.93e-04 &2.38\\
                32 &8.03e-02 &1.00    &1.52e-05 &2.76    &6.57e-06 &1.99    &9.60e-04 &2.01    &2.94e-03 &1.59    &5.96e-05 &2.30\\
                64 &4.02e-02 &1.00    &3.20e-06 &2.25    &1.65e-06 &1.99    &2.39e-04 &2.00    &9.94e-04 &1.56    &1.29e-05 &2.21\\
                \text{Conv} &  &1.0     &         &2.0     &         &2.0     &         &1.0     &         &N/A      &         &N/A\\
\hline
\end{tabular}
\end{center}
\end{table}

Table \ref{NE:TRI:Case1-2-0-0-0} shows the computational result for the $P_2(T)|P_0(\pa T)|[P_0(\pa T)]^2||P_0(T)$ element on the uniform triangular partition of $\O_1$.   We take $\gamma_1=-3$ and $\gamma_2=-1$. The exact solution is  $u=\cos(x)\sin(y)$. For different values of $\rho_i$ for $i=1,2$, we observe that the convergence rates for $\3bare_h\3bar$, $\|e_0\|$ and $\|e_b\|_{\E_h}$ are consistent with the theory, and the convergence rate  for $\|\pmb{e_g}\|_{\E_h}$   outperforms the theory.

\begin{table}[htbp]\centering\scriptsize
\tiny
\begin{center}
\caption{Numerical errors and convergence rates for the $P_2(T)|P_0(\pa T)|[P_0(\pa T)]^2||P_0(T)$ element.}\label{NE:TRI:Case1-2-0-0-0}
\begin{tabular}{p{0.3cm}p{0.9cm}p{0.2cm}p{0.9cm}p{0.2cm}p{0.9cm}p{0.2cm}p{0.9cm}p{0.4cm}p{1.00cm}p{0.2cm}p{1.0cm}p{0.3cm}}
\hline
  $1/h$&$\3bare_h\3bar$&Rate&$\|e_0\|$&Rate&$\|e_b\|_{\E_h}$&Rate&$\|\pmb{e_g}\|_{\E_h}$&Rate&$|e_0|_{2}$&Rate&$|e_0|_{1}$&Rate\\
\hline
&\text{$\rho_1=1$,~~$\rho_2=1$}&&&&&&&&&&&\\
\hline
                8   &2.35e-01 &0.34    &8.64e-03 &2.05    &5.62e-03 &2.07    &7.66e-02 &0.45    &1.79e-01 &0.41    &3.71e-02 &0.43\\
                16  &1.63e-01 &0.53    &2.28e-03 &1.92    &1.51e-03 &1.90    &3.91e-02 &0.97    &9.86e-02 &0.86    &1.91e-02 &0.96\\
                32  &9.63e-02 &0.76    &6.50e-04 &1.81    &4.50e-04 &1.75    &1.37e-02 &1.51    &4.18e-02 &1.24    &6.70e-03 &1.51\\
                64  &5.17e-02 &0.90    &1.82e-04 &1.83    &1.28e-04 &1.81    &3.94e-03 &1.80    &1.70e-02 &1.30    &1.92e-03 &1.80\\
                128 &2.67e-02 &0.96    &4.87e-05 &1.91    &3.44e-05 &1.90    &1.04e-03 &1.92    &7.37e-03 &1.20    &5.09e-04 &1.92\\
                \text{Conv} &  &1.0     &         &2.0     &         &2.0     &         &1.0     &         &N/A      &         &N/A\\
\hline
&\text{$\rho_1=10$,~~$\rho_2=10$}&&&&&&&&&&&\\
\hline
                8   &1.32e-01 &0.76    &1.08e-03 &2.00    &5.50e-04 &1.81    &1.87e-02 &1.22    &5.21e-02 &1.07    &9.28e-03 &1.22\\
                16  &7.23e-02 &0.87    &2.64e-04 &2.03    &1.66e-04 &1.73    &5.95e-03 &1.65    &2.21e-02 &1.24    &2.96e-03 &1.65\\
                32  &3.76e-02 &0.94    &6.88e-04 &1.94    &4.72e-05 &1.82    &1.64e-03 &1.86    &9.72e-03 &1.19    &8.18e-04 &1.86\\
                64  &1.92e-02 &0.97    &1.80e-05 &1.93    &1.27e-05 &1.90    &4.28e-04 &1.94    &4.56e-03 &1.09    &2.14e-04 &1.94\\
                128 &9.66e-03 &0.99    &4.65e-06 &1.96    &3.28e-06 &1.95    &1.09e-04 &1.97    &2.22e-03 &1.04    &5.44e-05 &1.97\\
                \text{Conv} &  &1.0     &         &2.0     &         &2.0     &          &1.0    &          &N/A     &         &N/A\\
\hline
&\text{$\rho_1=100$,~~$\rho_2=1$}&&&&&&&&&&&\\
\hline
                8   &9.45e-02 &0.81    &7.68e-04 &2.27    &4.11e-04 &1.88    &1.25e-02 &1.60    &8.80e-02 &0.69    &3.45e-03 &1.57\\
                16  &5.00e-02 &0.92    &1.69e-04 &2.18    &1.09e-04 &1.92    &3.49e-03 &1.84    &4.82e-02 &0.87    &9.75e-04 &1.82\\
                32  &2.56e-02 &0.97    &4.05e-05 &2.06    &2.79e-05 &1.96    &9.13e-04 &1.93    &2.51e-02 &0.94    &2.56e-04 &1.93\\
                64  &1.30e-02 &0.98    &1.01e-05 &2.01    &7.06e-06 &1.98    &2.33e-04 &1.97    &1.28e-02 &0.97    &6.54e-05 &1.97\\
                128 &6.51e-03 &0.99    &2.52e-06 &2.00    &1.78e-06 &1.99    &5.87e-05 &1.99    &6.44e-03 &0.99    &1.65e-05 &1.99\\
                \text{Conv} &  &1.0     &         &2.0     &         &2.0     &         &1.0     &         &N/A      &         &N/A\\
\hline
\end{tabular}
\end{center}
\end{table}

\subsubsection{The {\rm gWG} element with \text{$n<k-2$}}

In Table \ref{NE:tri:Case3-3-2-0-0-1}, the numerical errors and convergence rates are shown for the $P_3(T)|P_m(\pa T)|[P_\ell(\pa T)]^2||P_0(T)$ element on the uniform triangular partition of $\O_1$. We set $\gamma_1=-3$, $\gamma_2=-1$, $\rho_1=1$ and  $\rho_2=1$.
The exact solution is $u=\cos(x+1)\sin(2y-1)$. We can see from Table \ref{NE:tri:Case3-3-2-0-0-1} that the convergence rates for $\3bare_h\3bar$, $\|e_0\|$ and $\|e_b\|_{\E_h}$ are a bit lower than the theoretical rates respectively. However, the convergence rate for  $\|\pmb{e_g}\|_{\E_h}$ is higher than the theoretical order ${\cal O}(h)$.

\begin{table}[htbp]\centering\scriptsize
\tiny
\begin{center}
\caption{Numerical errors and convergence rates for the $P_3(T)|P_m(\pa T)|[P_\ell(\pa T)]^2||P_0(T)$ element.}\label{NE:tri:Case3-3-2-0-0-1}
\begin{tabular}{p{0.3cm}p{0.9cm}p{0.2cm}p{0.9cm}p{0.2cm}p{0.9cm}p{0.2cm}p{0.9cm}p{0.2cm}p{1.15cm}p{0.2cm}p{1.15cm}p{0.3cm}}
\hline
  $1/h$&$\3bare_h\3bar$&Rate&$\|e_0\|$&Rate&$\|e_b\|_{\E_h}$&Rate&$\|\pmb{e_g}\|_{\E_h}$&Rate&$|e_0|_{2}$&Rate&$|e_0|_{1}$&Rate\\
\hline
&\text{$m=0$,~~$\ell=1$}&&&&&&&&&&&\\
\hline
                4  &6.06e-01 &0.73    &3.85e-02 &2.23    &1.85e-02 &1.82    &1.38e-01 &0.84    &2.23e-01 &0.52    &5.30e-02 &0.71\\
                8  &4.06e-01 &0.58    &1.39e-02 &1.47    &7.18e-03 &1.37    &1.05e-01 &0.40    &1.99e-01 &0.17    &3.81e-02 &0.48\\
                16 &2.86e-01 &0.51    &6.99e-03 &1.00    &3.75e-03 &0.94    &6.52e-02 &0.69    &1.38e-01 &0.53    &2.31e-02 &0.72\\
                32 &1.81e-01 &0.66    &2.75e-03 &1.35    &1.50e-03 &1.32    &2.73e-02 &1.25    &6.54e-02 &1.07    &9.65e-03 &1.26\\
                64 &1.02e-01 &0.83    &8.42e-04 &1.71    &4.61e-04 &1.70    &8.63e-03 &1.66    &2.48e-02 &1.40    &3.04e-03 &1.67\\
                \text{Conv} &  &1.0     &         &2.0     &         &2.0     &         &1.0     &         &N/A     &         &N/A\\
\hline
&\text{$m=2$,~~$\ell=1$}&&&&&&&&&&&\\
\hline
                4  &5.71e-01 &0.71    &1.98e-02 &1.82    &2.06e-02 &1.47    &1.21e-01 &0.99    &1.96e-01 &0.60    &4.45e-02 &0.78\\
                8  &3.61e-01 &0.66    &8.54e-03 &1.21    &9.28e-03 &1.15    &7.40e-02 &0.71    &1.44e-01 &0.45    &2.65e-02 &0.75\\
                16 &2.23e-01 &0.70    &3.47e-03 &1.30    &3.79e-03 &1.29    &3.38e-02 &1.13    &7.54e-02 &0.94    &1.19e-02 &1.15\\
                32 &1.25e-01 &0.83    &1.10e-03 &1.66    &1.20e-03 &1.66    &1.13e-02 &1.59    &2.99e-02 &1.34    &3.95e-03 &1.59\\
                64 &6.65e-02 &0.92    &3.05e-04 &1.85    &3.34e-04 &1.85    &3.19e-03 &1.82    &1.08e-02 &1.47    &1.11e-03 &1.82\\
                \text{Conv} &  &1.0     &         &2.0     &         &2.0     &          &1.0    &         &N/A     &         &N/A\\
\hline
&\text{$m=2$,~~$\ell=0$}&&&&&&&&&&&\\
\hline
                4  &4.75e-01 &0.53    &1.85e-02 &1.78    &1.87e-02 &1.44    &1.00e-01 &0.46    &3.12e-01 &0.00    &4.49e-02 &0.46\\
                8  &3.37e-01 &0.49    &8.67e-03 &1.09    &9.36e-03 &1.00    &6.26e-02 &0.68    &2.28e-01 &0.45    &2.81e-02 &0.67\\
                16 &2.20e-01 &0.62    &3.64e-03 &1.25    &3.98e-03 &1.24    &2.82e-02 &1.15    &1.29e-01 &0.81    &1.27e-02 &1.14\\
                32 &1.26e-01 &0.80    &1.16e-03 &1.65    &1.27e-03 &1.64    &9.33e-03 &1.60    &6.41e-02 &1.01    &4.22e-03 &1.59\\
                64 &6.75e-02 &0.91    &3.23e-04 &1.85    &3.54e-04 &1.85    &2.63e-03 &1.83    &3.11e-02 &1.04    &1.19e-03 &1.83\\
                \text{Conv} &  &1.0     &         &2.0     &         &2.0     &         &1.0     &         &N/A     &         &N/A\\
\hline
\end{tabular}
\end{center}
\end{table}

\subsection{Low regularity solutions}
We shall numerically demonstrate the performance of the {\rm gWG} method for low regularity solutions. The stabilization parameters are $\gamma_1=-3$, $\gamma_2=-1$ and $\rho_i=1$ for $i=1,2$.

Table \ref{NE:L-shaped:Case1-2-1-0-0} illustrates the numerical performance of $P_2(T)|P_0(\pa T)|[P_0(\pa T)]^2||P_n(T)$ element on the uniform square partition of $\O_1$. The exact solution is given by $u=(x^2+y^2)^{0.8}$. It is obvious that the exact solution $u$ is in $H^{2.6-\varepsilon}$ for arbitrary small $\varepsilon>0$. One can easily observe  from Table \ref{NE:L-shaped:Case1-2-1-0-0} that: 1)  $\3bare_h\3bar$ converges in an order ${\cal O}(h^{0.6})$; 2) The convergence rates for $\|e_0\|$ and $\|e_b\|_{\E_h}$ seem to be around ${\cal O}(h^{2.4})$ for the case of $n=0$ and ${\cal O}(h^{2.6})$ for the case of $n=1$, respectively; 3) The convergence rates for  $\|\pmb{e_g}\|_{\E_h}$, $|e_0|_{2}$ and $|e_0|_{1}$ seem to be ${\cal O}(h^{1.6})$, ${\cal O}(h^{0.6})$ and ${\cal O}(h^{1.6})$, respectively. Note that the convergence theory developed in the previous sections is not available to the case with the solution in low regularity.

\begin{table}[htbp]\centering\scriptsize
\tiny
\begin{center}
\caption{Numerical errors and convergence rates for the $P_2(T)|P_0(\pa T)|[P_0(\pa T)]^2||P_n(T)$ element.}\label{NE:L-shaped:Case1-2-1-0-0}
\begin{tabular}{p{0.3cm}p{0.9cm}p{0.2cm}p{0.9cm}p{0.2cm}p{0.9cm}p{0.2cm}p{0.9cm}p{0.2cm}p{1.15cm}p{0.2cm}p{1.15cm}p{0.3cm}}
\hline
  $1/h$&$\3bare_h\3bar$&Rate&$\|e_0\|$&Rate&$\|e_b\|_{\E_h}$&Rate&$\|\pmb{e_g}\|_{\E_h}$&Rate&$|e_0|_{2}$&Rate&$|e_0|_{1}$&Rate\\
\hline
&\text{$n=0$}&&&&&&&&&&&\\
\hline
                8   &1.42e-00 &0.58    &1.40e-02 &2.47    &2.03e-02 &2.35    &7.06e-02 &1.00    &6.12e-01 &0.59    &5.60e-02 &1.25\\
                16  &9.43e-01 &0.59    &2.52e-03 &2.47    &3.90e-03 &2.38    &2.83e-02 &1.32    &4.02e-01 &0.61    &2.12e-02 &1.40\\
                32  &6.25e-01 &0.59    &4.48e-04 &2.49    &7.25e-04 &2.43    &1.00e-02 &1.50    &2.65e-01 &0.60    &7.38e-03 &1.52\\
                64  &4.13e-01 &0.60    &7.98e-05 &2.49    &1.34e-04 &2.44    &3.41e-03 &1.55    &1.75e-01 &0.60    &2.50e-03 &1.56\\
                128 &2.73e-01 &0.60    &1.48e-05 &2.44    &2.57e-05 &2.38    &1.14e-03 &1.57    &1.15e-01 &0.60    &8.35e-04 &1.58\\
                \text{Conv}   &         &N/A   &         &N/A     &         &N/A     &         &N/A     &         &N/A     &         &N/A\\
\hline
&\text{$n=1$}&&&&&&&&&&&\\
\hline
                8   &1.20e-00 &0.60    &4.76e-03 &2.60    &6.36e-04 &1.65    &4.16e-03 &1.04    &5.18e-01 &0.61    &3.48e-02 &1.59\\
                16  &7.90e-01 &0.60    &1.28e-03 &2.60    &1.34e-04 &2.25    &1.39e-03 &1.58    &3.42e-01 &0.60    &1.15e-02 &1.60\\
                32  &5.21e-01 &0.60    &2.11e-04 &2.60    &2.25e-05 &2.57    &4.61e-04 &1.59    &2.25e-01 &0.60    &3.79e-03 &1.60\\
                64  &3.44e-01 &0.60    &3.48e-05 &2.60    &3.72e-06 &2.60    &1.53e-04 &1.60    &1.49e-01 &0.60    &1.25e-03 &1.60\\
                128 &2.27e-01 &0.60    &5.75e-06 &2.60    &6.13e-07 &2.60    &5.04e-05 &1.60    &9.81e-02 &0.60    &4.12e-04 &1.60\\
                \text{Conv} &  &N/A   &         &N/A     &         &N/A     &         &N/A     &         &N/A     &         &N/A \\
\hline
\end{tabular}
\end{center}
\end{table}

Table \ref{NE:L-shaped:CaseLS-2-0-0-0} shows the numerical results for the $P_2(T)|P_0(\pa T)|[P_0(\pa T)]^2||P_0(T)$ element. The right-hand side function and boundary conditions are chosen such that the exact solution is $u=r^{3/2}\sin (\frac{\theta}{2})$ in which $r=\sqrt{x^2+y^2}$ and $\theta=\arctan(y/x)$. It is obvious $u\in H^{2.5-\varepsilon}$ for arbitrary small $\varepsilon>0$. We observe from Table \ref{NE:L-shaped:CaseLS-2-0-0-0}  that: 1) the convergence rates for the errors   on the uniform square partition of $\O_1$ are slightly  higher than those on the uniform triangular partition of $\O_1$ respectively; 2) the convergence rates for  $\3bare_h\3bar$ and $|e_0|_2$ on the uniform triangular partition of the convex domain $\O_1$ are almost the same with those  on the uniform triangular partition of the non-convex domain $\O_2$; 3) the convergence rates for  $\|e_0\|$, $\|e_b\|_{\E_h}$, $\|\pmb{e_g}\|_{\E_h}$ and $|e_0|_1$ on the uniform triangular partition of the convex domain $\O_1$ are a bit higher than those  on the uniform triangular partition of the non-convex domain $\O_2$. Again,   the convergence theory developed in the previous sections is not available to the case with the solution in low regularity. Readers are welcome to draw their own conclusions. 

\begin{table}[htbp]\centering\scriptsize
\tiny
\begin{center}
\caption{Numerical errors and convergence rates for the $P_2(T)|P_0(\pa T)|[P_0(\pa T)]^2||P_0(T)$ element.}\label{NE:L-shaped:CaseLS-2-0-0-0}
\begin{tabular}{p{0.3cm}p{0.9cm}p{0.2cm}p{0.9cm}p{0.2cm}p{0.9cm}p{0.2cm}p{0.9cm}p{0.2cm}p{1.15cm}p{0.2cm}p{1.15cm}p{0.3cm}}
\hline
  $1/h$&$\3bare_h\3bar$&Rate&$\|e_0\|$&Rate&$\|e_b\|_{\E_h}$&Rate&$\|\pmb{e_g}\|_{\E_h}$&Rate&$|e_0|_{2}$&Rate&$|e_0|_{1}$&Rate\\
\hline
&\text{$\O_1$,~uniform~square~partition}&&&&&&&&&&&\\
\hline
                   8   &9.50e-02 &0.47    &5.99e-04 &1.47    &1.20e-03 &1.46    &9.11e-03 &1.34    &4.57e-02 &0.71    &3.39e-03 &1.13\\
                   16  &6.34e-02 &0.58    &1.73e-04 &1.79    &3.46e-04 &1.79    &3.14e-03 &1.54    &2.83e-02 &0.69    &1.22e-03 &1.47\\
                   32  &4.12e-02 &0.62    &4.49e-05 &1.95    &8.97e-05 &1.95    &1.03e-03 &1.61    &1.77e-02 &0.67    &4.08e-04 &1.58\\
                   64  &2.64e-02 &0.64    &1.12e-05 &2.00    &2.24e-05 &2.00    &3.32e-04 &1.63    &1.12e-02 &0.67    &1.32e-04 &1.63\\
                   128 &1.68e-02 &0.65    &2.76e-06 &2.02    &5.53e-06 &2.02    &1.06e-04 &1.65    &7.03e-03 &0.67    &4.23e-05 &1.64 \\
                \text{Conv}   &        &N/A   &         &N/A     &         &N/A     &         &N/A     &         &N/A     &         &N/A\\
\hline
&\text{$\O_1$,~uniform~triangular~partition}&&&&&&&&&&&\\
\hline
                   8   &3.77e-01 &0.11    &5.78e-03 &1.05    &2.86e-03 &0.49    &1.03e-01 &0.39    &3.03e-01 &0.17    &4.97e-02 &0.38\\
                   16  &3.13e-01 &0.27    &2.52e-03 &1.20    &1.58e-03 &0.85    &5.77e-02 &0.83    &2.29e-01 &0.40    &2.81e-02 &0.82\\
                   32  &2.37e-01 &0.40    &9.51e-04 &1.41    &6.50e-04 &1.28    &2.46e-02 &1.23    &1.63e-01 &0.49    &1.20e-02 &1.23\\
                   64  &1.73e-01 &0.46    &3.08e-04 &1.62    &2.16e-04 &1.59    &9.36e-03 &1.40    &1.15e-01 &0.50    &4.56e-03 &1.40\\
                   128 &1.24e-01 &0.48    &8.98e-05 &1.78    &6.32e-05 &1.77    &3.41e-03 &1.46    &8.16e-02 &0.50    &1.66e-03 &1.46\\
                \text{Conv} &   &N/A   &         &N/A     &         &N/A     &         &N/A     &         &N/A     &         &N/A\\
\hline
&\text{$\O_2$,~uniform~triangular~partition}&&&&&&&&&&&\\
\hline
                   4  &3.44e-01 &0.43    &1.17e-02 &2.18    &4.24e-03 &0.83    &6.78e-02 &1.19    &3.21e-01 &0.41    &2.98e-02 &1.16\\
                   8  &2.48e-01 &0.47    &3.87e-03 &1.59    &2.44e-03 &0.80    &2.65e-02 &1.36    &2.32e-01 &0.46    &1.18e-02 &1.34\\
                   16 &1.77e-01 &0.49    &1.81e-03 &1.09    &1.26e-03 &0.95    &1.06e-02 &1.32    &1.66e-01 &0.48    &4.82e-03 &1.29\\
                   32 &1.26e-01 &0.49    &8.39e-04 &1.11    &5.92e-04 &1.09    &4.40e-03 &1.26    &1.18e-01 &0.49    &2.05e-03 &1.23\\
                   64 &8.91e-02 &0.50    &3.81e-04 &1.14    &2.70e-04 &1.14    &1.87e-03 &1.24    &8.39e-02 &0.50    &8.85e-04 &1.21\\
                \text{Conv}   &        &N/A   &         &N/A     &         &N/A     &         &N/A     &         &N/A     &         &N/A\\
\hline
\end{tabular}
\end{center}
\end{table}

\bigskip
\bigskip

\vfill\eject

\end{document}